\theoremstyle{plain}
\newtheorem{theorem}{Theorem}[section]
\newtheorem{lemma}[theorem]{Lemma}
\newtheorem{definition-theorem}[theorem]{Definition-Theorem}
\newtheorem{proposition}[theorem]{Proposition}
\newtheorem{corollary}[theorem]{Corollary}
\newtheorem{notation}[theorem]{Notation}
\theoremstyle{definition}
\newtheorem{definition}[theorem]{Definition}
\newtheorem{example}[theorem]{Example}
\newtheorem{remark}[theorem]{Remark}
\newcommand \bth[1] { \begin{theorem}\label{t#1} }
\newcommand \ble[1] { \begin{lemma}\label{l#1} }
\newcommand \bpr[1] { \begin{proposition}\label{p#1} }
\newcommand \bco[1] { \begin{corollary}\label{c#1} }
\newcommand \bde[1] { \begin{definition}\label{d#1}\rm }
\newcommand \bex[1] { \begin{example}\label{e#1}\rm }
\newcommand \bre[1] { \begin{remark}\label{r#1}\rm }
\newcommand \bnota[1] {\begin{notation}\label{n#1}\rm }
\newcommand {\ele} { \end{lemma} }
\newcommand {\epr} { \end{proposition} }
\newcommand {\eco} { \end{corollary} }
\newcommand {\ede} { \end{definition} }
\newcommand {\eex} { \end{example} }
\newcommand {\ere} { \end{remark} }
\newcommand {\enota} { \end{notation} }
\DeclareMathOperator \ad { {\mathrm{ad}} }  
\DeclareMathOperator \tr { {\mathrm{tr}} }
\DeclareMathOperator \Hom { {\mathrm{Hom}} }
\DeclareMathOperator \ind{ {\mathrm{ind}}}
\DeclareMathOperator \Ind { {\mathrm{Ind}} }
\DeclareMathOperator \Irr { {\mathrm{Irr}} }
\DeclareMathOperator \gen { { \mathrm{gen}}}
\DeclareMathOperator \sgn { { \mathrm{sgn}}}
\DeclareMathOperator \trivial { {{\mathrm triv}}} 
\DeclareMathOperator \ellip  { { {\mathrm ell}}}
\DeclareMathOperator \im { { {\mathrm im}}}
\DeclareMathOperator \Pin { { {\mathrm Pin}}}
\DeclareMathOperator \Alt { { {\mathrm Alt}}}
\DeclareMathOperator \sol { { {\mathrm sol}}}
\renewcommand{\qed}{\begin{flushright} {\bf Q.E.D.}\ \ \ \ \
                  \end{flushright} }    
\begin{document}
\setlength{\baselineskip}{1.2\baselineskip}
\title[Spin representations of real reflection groups]
{Spin representations of real reflection groups of non-crystallographic root systems}
\author[Kei Yuen Chan]{Kei Yuen Chan}
\address{
Department of Mathematics \\
University of Utah}
\email{chan@math.utah.edu}

\maketitle

\begin{abstract}
A uniform parametrization for the irreducible spin representations of Weyl groups in terms of nilpotent orbits is recently achieved by Ciubotaru (2011). This paper is a generalization of this result to other real reflection groups. 

Let $(V_0, R, V_0^{\vee}, R^{\vee})$ be a root system with the real reflection group $W$. We define a special subset of points in $V_0^{\vee}$ which will be called solvable points. Those solvable points, in the case $R$ crystallographic, correspond to the nilpotent orbits whose elements have a solvable centralizer in the corresponding Lie algebra. Then a connection between the irreducible spin representations of $W$ and those solvable points in $V_0^{\vee}$ is established.


\end{abstract}

\section{Introduction}

\subsection{Introduction} \label{s intro}
Let $(V_0, R, V_0^{\vee}, R^{\vee})$ be a reduced root system (see section \ref{ss def root sys}) with the corresponding real reflection group $W$. Let $O(V_0^{\vee})$ be the orthogonal group of $V_0^{\vee}$ and let $\Pin(V_0^{\vee})$ be the double cover of $O(V_0^{\vee})$ with the covering map $p$ (see Section \ref{ss clifford}). Since $W$ is a subgroup of orthogonal transformations on $V_0^{\vee}$, we may consider the preimage $p^{-1}(W)$, the double cover $\widetilde{W}$ of $W$. The classification of genuine irreducible representations of $\widetilde{W}$ (i.e. representations which do not factor through $W$), or so-called spin representations of $W$, has been known for a long time from the work of Schur, Morris, Read, Stembridge and others (\cite{Mo1}, \cite{Mo2}, \cite{Re}, \cite{Re2} and \cite{St}). A recent paper \cite{Ci} by Ciubotaru introduces an approach to classify genuine representations in the framework of nilpotent orbits in semisimple Lie algebras. The orbits of nilpotent elements whose elements have a solvable centralizer play an essential role in the classification. This paper generalizes \cite{Ci} to the reflection groups associated to noncrystallographic root systems.

An important ingredient in our paper is a special subset of points in $V_0^{\vee}$, which will be called solvable points. For maximum generality, fix a $W$-invariant parameter function $c: R \rightarrow \mathbb{R}$ and write $c_{\alpha}$ for $c(\alpha)$. 
Before giving a precise definition of solvable points, we define a point $\gamma \in V_0^{\vee}$ to be {\it distinguished} if 
\[   | \left\{ \alpha \in R : (\alpha, \gamma)=c_{\alpha} \right\}| =|\left\{ \alpha \in R : (\alpha, \gamma)=0 \right\}|+\dim_{\mathbb{R}} V_0 .\]
 This combinatorial definition is introduced in \cite{HO} by Heckman and Opdam and can be viewed as a generalization of the characterization of distinguished nilpotent orbits in the Bala-Carter theory (\cite{Ca}). Let $\Delta$ be a fixed set of simple roots in $R$. For $J\subseteq \Delta$, let $V_{0,J}^{\vee}$ be the real vector space spanned by coroots corresponding to $J$.  We extend the set of distinguished points to a larger class:
\begin{definition}
A point $\gamma$ in $V_0^{\vee}$ is said to be {\it solvable} if $\gamma$ is conjugate to some point which is distinguished in $V_{0,J}^{\vee}$ for some $J \subseteq \Delta$, and satisfies
\[   | \left\{ \alpha \in R : (\alpha, \gamma)=c_{\alpha} \right\}| =|\left\{ \alpha \in R : (\alpha, \gamma)=0 \right\}|+|J| .\]
Let $\mathcal V_{\mathrm{sol}}$ be the set of $W$-orbits of solvable points in $V^{\vee}_0$. 
\end{definition}
\noindent
It is easy to see that a solvable point with $J=\Delta$ is distinguished. The set of solvable points also contains the set of quasi-distinguished points in the sense of \cite{Ree} by Reeder. The terminology of solvable points is explained by the following result, which will be proven in Section \ref{s def sol}:

\begin{theorem} \label{thm def sol int}
Let $R$ be a crystallographic root system and set $c \equiv 2$. Let $\mathfrak{g}$ be the semisimple Lie algebra associated to $R$. Let $\mathcal N_{\sol}$ be the set of nilpotent orbits in $\mathfrak{g}$ whose elements have a solvable centralizer. Then there is a natural one-to-one correspondence between the set $\mathcal V_{\sol}$ and the set $\mathcal N_{\sol}$. 
\end{theorem}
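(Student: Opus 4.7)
The plan is to construct the bijection via the Jacobson--Morozov theorem and the Bala--Carter classification of nilpotent orbits, translating solvability of the centralizer into a dimension count through $\sl_2$-representation theory.

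Define a map $\mathcal{N}_{\sol}\to \mathcal{V}_{\sol}$ as follows. For $\mathcal{O}\in \mathcal{N}_{\sol}$ and $e\in\mathcal{O}$, the Bala--Carter theorem furnishes, uniquely up to $W$-conjugacy, a subset $J\subseteq \Delta$ and a distinguished nilpotent element $e_L$ of $[\l_J,\l_J]$ that is $G$-conjugate to $e$. Apply Jacobson--Morozov inside $[\l_J,\l_J]$ to obtain an $\sl_2$-triple $(e_L,h_L,f_L)$ with $h_L\in V_{0,J}^\vee$, and set $\gamma:=h_L$. That $\gamma$ is distinguished in $V_{0,J}^\vee$ amounts to the classical fact that distinguishedness of $e_L$ in $[\l_J,\l_J]$ is equivalent to the vanishing of the reductive part $Z_{[\l_J,\l_J]}(e_L)^{\mathrm{red}}$; a root-space decomposition of $[\l_J,\l_J]$ converts this to the Heckman--Opdam distinguished identity for $\gamma\in V_{0,J}^\vee$ with $c\equiv 2$.

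The central computation is that solvability of $Z_\g(e)$ is equivalent to the defining count identity of a solvable point. By the Levi decomposition of the centralizer, $Z_\g(e)$ is solvable iff $Z_\g(e)^{\mathrm{red}}$ is abelian. A Cartan subalgebra of $Z_\g(e)^{\mathrm{red}}$ is provided by $\z(\l_J)$ (this is how the Bala--Carter Levi is defined), so its rank equals $\dim \z(\l_J) = \dim V_0 - |J|$, and abelianness amounts to
\[
\dim Z_\g(e)^{\mathrm{red}} \;=\; \dim V_0 - |J|.
\]
On the other hand, decomposing $\g$ into irreducible $\langle e_L,h_L,f_L\rangle$-submodules and counting their highest weight vectors yields
\[
\dim Z_\g(e_L)^{\mathrm{red}} \;=\; \dim \g_0^{h_L}-\dim \g_2^{h_L} \;=\; \dim V_0 + |\{\alpha\in R : (\alpha,\gamma)=0\}| - |\{\alpha\in R : (\alpha,\gamma)=2\}|.
\]
Equating these two expressions yields, with $c\equiv 2$, precisely the defining identity $|\{\alpha\in R : (\alpha,\gamma)=2\}|=|\{\alpha\in R : (\alpha,\gamma)=0\}|+|J|$ of a solvable point.

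The inverse map $\mathcal{V}_{\sol}\to \mathcal{N}_{\sol}$ runs this construction in reverse: given $[\gamma]\in \mathcal{V}_{\sol}$, pick a representative distinguished in some $V_{0,J}^\vee$, apply Heckman--Opdam \cite{HO} inside the root subsystem attached to $J$ to realize $\gamma$ as the neutral element of a distinguished nilpotent $e_L\in[\l_J,\l_J]$, and assign the $G$-orbit of $e_L$; the count identity then forces $Z_\g(e_L)$ to be solvable via the dimension calculation above. The main obstacle will be well-definedness: a solvable $\gamma$ may be distinguished in $V_{0,J}^\vee$ for more than one $J$, and a nilpotent orbit may admit several Bala--Carter representatives. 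This should be resolved by observing that the $W$-invariant quantity $|\{\alpha\in R : (\alpha,\gamma)=2\}|-|\{\alpha\in R : (\alpha,\gamma)=0\}|$ equals $|J|$ by the solvable-point identity, so $|J|$ is determined by $[\gamma]$; combined with the standard $W$-conjugacy uniqueness of the Bala--Carter Levi, this pins down the pair $(\l_J, e_L)$ up to $G$-conjugacy and hence the $G$-orbit in $\g$.
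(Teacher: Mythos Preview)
Your proposal is correct and follows essentially the same route as the paper: the paper likewise builds the correspondence from the Bala--Carter/Jacobson--Morozov map $\Pi$ (Lemma~\ref{lem ref bala}) and then proves, via $\sl_2$-representation theory on the grading $\g=\bigoplus_i\g_i$, that solvability of $\g^e$ is equivalent to the root count $|\{\alpha:(\alpha,h)=2\}|=|\{\alpha:(\alpha,h)=0\}|+|J|$ (Proposition~\ref{prop eqv def}). The only cosmetic difference is that the paper phrases the pivotal step as $\l^e=\g^{\l_J}$ (condition~(2)) rather than ``rank equals dimension'' of the reductive centralizer, and it establishes the global bijection $\Pi$ first and then restricts, which sidesteps your well-definedness discussion.
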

\noindent
Roughly, the correspondence in Theorem \ref{thm def sol int} takes a nilpotent orbit to the semisimple element of a corresponding Jacobson-Morozov triple. 


Our main result Theorem \ref{thm main result} below establishes a connection between those solvable points and the spin representations of $W$ in the case that $R$ is noncrystallographic. Before stating the main result, we need few more notations. Fix a symmetric $W$-invariant bilinear form $\langle, \rangle$ on $V^{\vee}_0$ as in (\ref{notation inner}). Define a Casmir-type element in the Clifford algebra $C(V_0^{\vee})$:
\[ \Omega_{\widetilde{W}} =- \frac{1}{4} \sum_{\alpha>0, \beta>0,s_{\alpha}(\beta)<0} c_{\alpha}c_{\beta} | \alpha^{\vee}||\beta^{\vee}| f_{\alpha}f_{\beta}  ,\]
where $s_{\alpha} \in W$ is the reflection corresponding to $\alpha$ and $f_{\alpha}$ is a certain element in $p^{-1}(s_{\alpha})$ (see Section \ref{ss clifford}) and $|\alpha^{\vee}|=\langle \alpha^{\vee}, \alpha^{\vee} \rangle^{1/2}$. The element $\Omega_{\widetilde{W}}$ is introduced in \cite{Ci} and is related to the Dirac operator for the graded affine Hecke algebra in \cite{BCT} by Barbasch, Ciubotaru and Trapa. Indeed, $\Omega_{\widetilde{W}}$ is in the center of $C(V_0^{\vee})$ and so a version of the Schur's lemma implies that this element acts on a simple $\widetilde{W}$-representation $(\widetilde{U}, \widetilde{\chi})$ by a scalar $\widetilde{\chi}(\Omega_{\widetilde{W}})$. Let $\Irr_{\mathrm{gen}}(\widetilde{W})$ be the set of irreducible genuine $\widetilde{W}$-representations. Define an equivalence relation $\sim$ on $\Irr_{\gen}(\widetilde{W})$: $\widetilde{\sigma} \sim \widetilde{\sigma} \otimes \sgn$, where $\sgn$ is the sign $W$-representation.



In Section \ref{s surj}, we prove our main result:
\begin{theorem} \label{thm main result}
Let $(V_0, R, V_0^{\vee}, R^{\vee})$ be a noncrystallographic root system. Fix a symmetric $W$-invariant bilinear form $\langle , \rangle$ on $V_0^{\vee}$ as in (\ref{notation inner}). Then there exists a unique surjective map 
\[\Phi: \Irr_{\gen}(\widetilde{W})/\sim \rightarrow \mathcal V_{\mathrm{sol}}\]
 such that for any representative $\gamma \in \Phi([\widetilde{\chi}])$,
\begin{eqnarray} \label{eq surj map cond}
 \widetilde{\chi}(\Omega_{\widetilde{W}})=\langle \gamma,\gamma \rangle .
\end{eqnarray}
Furthermore, $\Phi$ is bijective if and only if $c_{\alpha} \neq 0$ for some $\alpha \in R$, and either:
\begin{enumerate}
\item[(1)] $R=I_2(n)$ ($n$ odd) or $H_3$; or
\item[(2)] $R=I_2(n)$ ($n$ even) with  $\cos(k\pi/n) c'-\cos(l\pi/n) c'' \neq 0$
for any integers $k,l$ which have distinct parity and $\cos(k\pi/n), \cos(l\pi/n)\neq \pm 1$, where $c'$ and $c''$ are the two values corresponding to the two distinct $W$-orbits in the parameter function $c$ . 
\end{enumerate}\end{theorem}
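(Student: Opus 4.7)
The plan is to prove \thref{main result} by a type-by-type analysis of the three families of noncrystallographic root systems: $I_2(n)$, $H_3$, and $H_4$. In each case I would compute both sides of the correspondence explicitly: on one side, the set $\Irr_{\gen}(\widetilde W)/\sim$ together with the central scalar $\widetilde\chi(\Omega_{\widetilde W})$ for each equivalence class; on the other side, the set $\mathcal V_{\sol}$ together with the norm $\langle \gamma,\gamma\rangle$ for each orbit representative. The map $\Phi$ is then constructed by matching scalars. Its uniqueness is forced by (\ref{eq surj map cond}) once we check that every $\widetilde\chi$ is sent to some $W$-orbit of a solvable point; the bijectivity criterion amounts to asking when no collisions among norms occur, modulo sign twist.

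For the scalars $\widetilde\chi(\Omega_{\widetilde W})$, I would follow the approach of \cite{Ci}: rewrite $\Omega_{\widetilde W}$ in the Clifford algebra using the defining relations, isolate the contributions of pairs $\alpha = \beta$ (which produce a constant scalar independent of the representation) from the pairs $\alpha \neq \beta$ (which lie inside the image of the $\widetilde W$ group algebra), and reduce the computation on $(\widetilde U, \widetilde\chi)$ to a trace over specific conjugacy classes of $\widetilde W$. For $I_2(n)$, $\widetilde W$ is the binary dihedral group and its genuine representations are classical: one obtains a closed-form expression for $\widetilde\chi(\Omega_{\widetilde W})$ in terms of $c'$ and $c''$ (or just $c'$ when $n$ is odd) and a single integer parameter indexing the spin reps. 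For $H_3$ and $H_4$ one uses the known character tables of $\widetilde W(H_3)$ and $\widetilde W(H_4)$ to evaluate the class sums; this is a finite, explicit calculation, routine in principle though potentially lengthy for $H_4$.

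For the solvable side, I would enumerate $J\subseteq \Delta$ up to $W$-conjugacy, identify the distinguished points in each parabolic subspace $V_{0,J}^\vee$ using the Heckman--Opdam combinatorial condition, and then select those contributing to $\mathcal V_{\sol}$ via the additional counting identity. For $I_2(n)$, $V_0^\vee$ is two-dimensional and the distinguished points in each $V_{0,J}^\vee$ reduce to the transverse intersection of two affine lines $(\alpha,\gamma)=c_\alpha$ and $(\beta,\gamma)=c_\beta$; with $n$ even this naturally splits into three subcases according to whether $\alpha,\beta$ lie in the same or different $W$-orbit, producing pairings $(c',c')$, $(c'',c'')$, $(c',c'')$ whose norms are given as explicit trigonometric functions of the angle between the two hyperplanes. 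For $H_3$ and $H_4$ the enumeration is more involved but finite, reducing to checking the proper parabolic subsystems of types $A_1$, $A_1\times A_1$, $A_2$, $I_2(5)$, $H_3$ (and additionally $A_3$, $H_4$ for $H_4$).

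Once both lists are in hand, matching produces $\Phi$. For $I_2(n)$ with $n$ odd and for $H_3$, comparing cardinalities and scalars confirms $\Phi$ is a bijection for all nonzero $c$. For $I_2(n)$ with $n$ even, a coincidence of norms between distinct solvable orbits can occur precisely when some $\cos(k\pi/n)c'=\cos(l\pi/n)c''$ with $k,l$ of distinct parity; the parity distinguishes pairings of roots in the same versus different $W$-orbit, and the exclusion $\cos(k\pi/n),\cos(l\pi/n)\neq \pm 1$ rules out intersections that already lie on a reflection hyperplane and thus do not define new solvable orbits. For $H_4$ bijectivity fails generically, so one must exhibit explicit pairs of inequivalent spin reps producing the same scalar. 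The main obstacle is the $H_4$ computation: with $|\widetilde W(H_4)|=28800$ and a sizeable collection of genuine irreducibles, the matching requires a careful, possibly computer-assisted, tabulation of both the scalars $\widetilde\chi(\Omega_{\widetilde W})$ and the norms of solvable orbit representatives, together with a systematic accounting of which coincidences are absorbed by the $\sim$ equivalence and which are genuine obstructions to injectivity.
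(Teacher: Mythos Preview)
Your proposal is correct and follows essentially the same route as the paper: a case-by-case analysis over $I_2(n)$ (odd and even), $H_3$, and $H_4$, computing $\widetilde\chi(\Omega_{\widetilde W})$ via the explicit trace formula obtained by separating the $\alpha=\beta$ and $\alpha\neq\beta$ contributions, computing $\langle\gamma,\gamma\rangle$ for the solvable points classified via Heckman--Opdam, and then matching. One small correction on the $I_2(n)$ even bijectivity mechanism: the failure is not that two \emph{distinct} solvable orbits acquire the same norm, but rather that two of the candidate points $\gamma_k$ fall into the \emph{same} $W$-orbit (so $|\mathcal V_{\sol}|$ drops below $n/2$ while $|\Irr_{\gen}(\widetilde W)/\!\sim|$ stays at $n/2$); the trigonometric condition in (2) is exactly what prevents this collapse.
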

\noindent
We see from the expression of $\Omega_{\widetilde{W}}$ that $(\sgn \otimes \widetilde{\chi})(\Omega_{\widetilde{W}})=\widetilde{\chi}(\Omega_{\widetilde{W}})$ and this explains why we need the equivalence $\sim$ in defining the surjective map above. Although the map $\Phi$ in $H_4$ or some special cases for $I_2(\mathrm{even})$ fails to be bijective, the sizes of the fibers of $\Phi$ are still one most of time. 

For the crystallographic cases (with the equal parameter $c \equiv 2$), we could still obtain a surjective map as the one in Theorem \ref{thm main result} by using our Theorem \ref{thm def sol int} to replace the image of the surjective map in \cite[Theorem 1]{Ci} with the set of solvable points. This explains how our result generalizes \cite{Ci}. However, a surjective map with only the property (\ref{eq surj map cond}) is not unique in general in the crystallographic cases. 


Our motivation for the Theorem \ref{thm main result} is to study the Dirac cohomology for the graded affine Hecke algebra in the noncrystallographic cases by analogue with the crystallographic cases in \cite{BCT} and \cite{CT}. We expect that those solvable points afford the central characters of interesting tempered modules with nonzero Dirac cohomology (in the sense of \cite{BCT}). Theorem \ref{thm main result} is evidence for the claim, but some further information such as the $W$-module structures of tempered modules is needed. In view of the paper \cite{CT} by Ciubotaru and Trapa, perhaps a preliminary step to understand the Dirac cohomology is to look at the elliptic representation theory of $W$ (\cite{Re}). A study of the latter object is carried out in Section \ref{s ellip}.



\subsection{Acknowledgment}
The author would like to thank Dan Ciubotaru and Peter Trapa for suggesting this topic. He is also very grateful to them providing guidances, answering questions and pointing out references during his work. The author would also like to thank the referee for useful comments.

\section{Preliminaries}

\subsection{Root systems and notations} \label{ss def root sys}

Fix a reduced root system $\Sigma=(V_0, R, V_0^{\vee}, R^{\vee})$ over $\mathbb{R}$ such that
\begin{itemize}
\item[(1)] $R \subset V_0 $ and $R^{\vee} \subset V_0^{\vee} $ span the real vector spaces $V_0$ and $V_0^{\vee}$ respectively.
\item[(2)] There exists a bilinear pairing 
\[ (.,.): V_0 \times V_0^{\vee} \rightarrow \mathbb{R}, \]
and a bijection from $R$ to $R^{\vee}$, denoted $\alpha \mapsto \alpha^{\vee}$, such that $(\alpha, \alpha^{\vee})=2$ for any $\alpha \in R$.
\item[(4)] For $\alpha \in R$, the reflections
  \[ s_{\alpha}: V_0 \rightarrow V_0,\quad s_{\alpha}(v)=v-(v,\alpha^{\vee})\alpha, \]
  \[ s_{\alpha}^{\vee}:V_0^{\vee} \rightarrow V_0^{\vee}, \quad s_{\alpha}^{\vee}(v')=v'-(\alpha,v')\alpha^{\vee} \]
 leave $R$ and $R^{\vee}$ invariant respectively. 
 \item[(5)] For $\alpha \in R$, the only multiples of $\alpha$ in $R$ are $\alpha$ and $-\alpha$. Moreover, $0 \not\in R$ and $0 \not\in R^{\vee}$.
\end{itemize}

A root system $\Sigma$ (or simply $R$) is said to be {\it crystallographic} if $(\alpha, \beta^{\vee}) \in \mathbb{Z}$ for all $\alpha, \beta \in R$. In the literature, a root system is often by definition crystallographic and our terminology of root systems here is not quite standard. 

Our primary concern in this paper is noncrystallographic cases. This includes $I_2(n)$ for $n=5$ and $n \geq 7$, $H_3$ and $H_4$ and their corresponding Dynkin diagrams are as follows respectively:
\[ 
\xy
(-70,4)*{\alpha_1};(-60,4)*{\alpha_2}; (-65,-2)*{n} ;(-70,0)*{\circ}="1";(-60,0)*{\circ}="2";
{\ar@{-} "1"; "2"  }; 
(-50,4)*{\alpha_1};(-40,4)*{\alpha_2}; (-45,-2)*{5} ;(-50,0)*{\circ}="4";(-40,0)*{\circ}="5";
{\ar@{-} "4"; "5"  };(-30,0)*{\circ}="6";{\ar@{-} "5";"6" }; (-30,4)*{\alpha_3};
(-20,4)*{\alpha_1};(-10,4)*{\alpha_2}; (-15,-2)*{5} ; (-20,0)*{\circ}="8";(-10,0)*{\circ}="9"; {\ar@{-} "8";"9" }; (0,0)*{\circ}="10";{\ar@{-} "9";"10" };  (10,0)*{\circ}="11";  {\ar@{-} "10"; "11"}; (10,4)*{\alpha_4};(0,4)*{\alpha_3};
\endxy
\]
When $I_2(n)$ is mentioned later, we do not necessarily assume $I_2(n)$ to be noncrystallographic i.e. $n$ can be any integer greater than or equal to $3$. Our results naturally cover all $I_2(n)$. When $R=I_2(n), H_3, H_4$, we shall fix a $W$-invariant inner product $\langle.,.\rangle$ on $V^{\vee}_0$ such that
\begin{eqnarray} \label{notation inner} 
\langle \alpha^{\vee}, \alpha^{\vee} \rangle &=&2 
\end{eqnarray}
for all $\alpha^{\vee} \in R^{\vee}$. Let $V=\mathbb{C} \otimes_{\mathbb{R}} V_0$ and let $V^{\vee}=\mathbb{C} \otimes_{\mathbb{R}} V_0^{\vee}$. Then we extend $\langle .,. \rangle$ to a symmetric $W$-invariant $\mathbb{C}$-bilinear form on $V^{\vee}$.

Denote by $W(R)$ or simply $W$ the subgroup of $GL(V_0)$ generated by all the reflections $s_{\alpha}$, where $\alpha \in R$. The map $s_{\alpha} \mapsto s_{\alpha}^{\vee}$ gives an embedding of $W$ into $GL(V_0^{\vee})$ so that 
$  (v,w\cdot\omega)=(w\cdot v, \omega) $ for all $w \in W$, $v \in V_0$ and  $\omega \in V_0^{\vee}$. 

 Fix a set $R_+$ of positive roots in $R$. Write $\alpha >0$ for $\alpha \in R_+$ and write $\alpha<0$ for $\alpha \in R \setminus R_+$. Let $R^{\vee}_+=\left\{ \alpha^{\vee} : \alpha \in R_+ \right\}$. Let $\Delta=\left\{ \alpha_1, \ldots, \alpha_r \right\}$ be the set of simple roots in $R_+$, where $r=\dim_{\mathbb{R}} V_0$.

The Coxeter group $W(I_2(n))$ is the dihedral group of order $2n$. The generators $s_{\alpha_1}$, $s_{\alpha_2}$ are subject to the relation: $(s_{\alpha_1}s_{\alpha_2})^n=1$. The Coxeter group $W(H_3)$ has order $120$ and is generated by 3 simple reflections, namely $s_{\alpha_1}, s_{\alpha_2}, s_{\alpha_3}$, subject to the following relations:
\[(s_{\alpha_1}s_{\alpha_2})^5=(s_{\alpha_2}s_{\alpha_3})^3=(s_{\alpha_1}s_{\alpha_3})^2=1. \]
The Coxeter group $W(H_4)$ has order $14400$ and is generated by $4$ simple reflections, $s_{\alpha_1}, s_{\alpha_2}, s_{\alpha_3}, s_{\alpha_4}$ with the following relations:
\[ (s_{\alpha_1}s_{\alpha_2})^5=(s_{\alpha_2}s_{\alpha_3})^3=(s_{\alpha_3}s_{\alpha_4})^3=1\]
and 
\[ (s_{\alpha_i}s_{\alpha_j})^2=1 \ \mbox{for $|i-j|>1$} . \]




\subsection{The Clifford algebra and the Pin group} \label{ss clifford}
Fix an inner product $\langle , \rangle$ on $V_0^{\vee}$ as in (\ref{notation inner}). Let $T(V_0^{\vee})$ be the tensor algebra of $V_0^{\vee}$. Let $I$ be the ideal generated by all the elements of the form
\[   \omega \otimes \omega'+\omega'\otimes \omega+2\langle \omega, \omega' \rangle . \]
Then the real Clifford algebra, denoted $C(V_0^{\vee})$, with respect to $V_0^{\vee}$ and the inner product $\langle , \rangle$ is the quotient algebra $T(V_0)/I$. 

Let $^t$ be the anti-automorphism of $C(V_0^{\vee})$ characterized by the properties:
\[ (ab)^t=b^ta^t \mbox{ for $a,b \in C(V_0^{\vee})$}, \omega^t=-\omega, \mbox{ for $\omega \in V_0^{\vee}$}. \]
Define a map $\epsilon: V_0^{\vee} \rightarrow V_0^{\vee}$, $\epsilon(\omega)=-\omega$. This induces an automorphism, still denoted by $\epsilon$, from $C(V_0^{\vee})$ to $C(V_0^{\vee})$. 
The map $\epsilon$ decomposes $C(V_0^{\vee})$ into $+1$ eigenspace $C(V_0^{\vee})_{\mathrm{even}}$ and $-1$ eigenspace $C(V_0^{\vee})_{\mathrm{odd}}$: 
\[ C(V_0^{\vee})=C(V_0^{\vee})_{\mathrm{even}}\oplus C(V^{\vee}_0)_{\mathrm{odd}} .\]
This induces a $\mathbb{Z}_2$-grading on $C(V_0^{\vee})$.

Define the Pin group of $V_0^{\vee}$ to be
\[ \mathrm{Pin}(V_0^{\vee}):= \left\{ a \in C(V_0^{\vee})   :      \epsilon(a)V_0^{\vee}a^{-1} \subset V_0^{\vee}, a^t=a^{-1}     \right\}.\]
We get a surjective homomorphism $p$ from $\Pin(V_0^{\vee})$ to $O(V_0^{\vee})$ defined by 
\[  p(a)\omega=\epsilon(a)\omega a^{-1}.\]
Here $O(V_0^{\vee})$ is the orthogonal group of $V_0^{\vee}$ with respect to the inner product $\langle , \rangle $.
 
 Then we have the following exact sequence:
\[ 1 \rightarrow \left\{ \pm 1\right\} \rightarrow \mathrm{Pin}(V_0^{\vee}) \stackrel{p}{\rightarrow} O(V_0^{\vee}) \rightarrow 1.\]

Let $\det_{V_0^{\vee}}: O(V_0^{\vee}) \rightarrow \left\{ \pm1 \right\}$ be the determinant function on $O(V_0^{\vee})$. Regarding $W$ as a subgroup of $O(V_0^{\vee})$, let $W_{\mathrm{even}}=\ker(\det_{V_0^{\vee}}) \cap W$. For the notational convenience later, set
\[W' = \left\{ \begin{array} {l l}
                W      & \mbox{ if $\dim_{\mathbb{R}} V_0$ is odd} \\
                W_{\mathrm{even}} & \mbox{ if $\dim_{\mathbb{R}} V_0$ is even} 
              \end{array}
        \right. .
   \]
Define 
\begin{equation*}
 \widetilde{W}=p^{-1}(W) \quad \mbox{ and } \quad\widetilde{W}'=p^{-1}(W'). 
 \end{equation*}


We describe the structure of $\widetilde{W}$. For $\alpha \in R$, define $f_{\alpha}$ to be the element in $C(V_0^{\vee})$ such that 
\[  f_{\alpha}=\frac{\alpha^{\vee}}{|\alpha^{\vee}|} .\]
Then one sees that $p(f_{\alpha})=s_{\alpha}$ and $f_{\alpha}^2=-1$. Furthermore,
\[ f_{\alpha}f_{\beta}f_{\alpha}=f_{s_{\alpha}(\beta)} .\]
The group $\widetilde{W}$ is generated by all $f_{\alpha}$ for $\alpha \in \Delta$.

\subsection{The double covers $\widetilde{W}$} \label{ss dc characters}
In this subsection, we review representations of $\widetilde{W}$ for non-crystallographic root systems. 

\bigskip

\noindent
{\it Case of $W=W(I_2(n))$}

The double cover $\widetilde{W}$ has generators $f_{\alpha_1}, f_{\alpha_2}$ (notation in Section \ref{ss clifford}) and is determined by the relations:
\[ (f_{\alpha_1}f_{\alpha_2})^n=f_{\alpha_1}^2=f_{\alpha_2}^2=-1 . \]

The character table (including genuine and nongenuine representations) of $\widetilde{I_2(n)}$ is provided in Table \ref{tb ch odd} and \ref{tb ch even} for the completeness.

\begin{table}[!ht]
  \caption{Character table of $\widetilde{W(I_2(n))}$ ($n$ odd) }  \label{tb ch odd}
\begin{tabular}{l c c c c c c }
\hline
 Characters     & $1$ & $-1$  &  $f_{\alpha_1}$ & $-f_{\alpha_1}$  & $(f_{\alpha_1}f_{\alpha_2})^k$ & $-(f_{\alpha_1}f_{\alpha_2})^k$   \\
                &   &          &                &                  & $(k=1, \ldots, \frac{n-1}{2})$       & $(k=1, \ldots,\frac{n-1}{2})$        \\
      \hline
$\trivial$     & $1$  & $1$   & $1$          &       $1$         &          $1$                   &   $1$                      \\  
$\sgn$          & $1$  & $1$   & $-1$         &       $-1$        &          $1$                   &   $1$                          \\  
$\phi_i$   ($i=1,\ldots,\frac{n-1}{2}$)       & $2$ & $2$ & $0$          &       $0$          &    $2\cos \frac{2ik\pi}{n}$ & $2\cos \frac{2ik\pi}{n}$ \\
$\widetilde{\chi}_1$ & $1$ & $-1$ & $-\sqrt{-1}$         &        $\sqrt{-1}$         &          $(-1)^k$       &   $-(-1)^k$         \\
$\widetilde{\chi}_2$ & $1$ & $-1$ & $\sqrt{-1}$         &        $-\sqrt{-1}$         &             $(-1)^k$       &   $-(-1)^k$        \\
$\widetilde{\rho}_i$ ($i=1,\ldots,\frac{n-1}{2}$) & $2$ & $-2$ & $0$  &      $0$          &    $2(-1)^{k}\cos \frac{2ik\pi}{n}$ & $2(-1)^{k+1}\cos \frac{2ik\pi}{n}$  \\
\hline
\end{tabular}

\end{table}

\begin{table} [!ht]
  \caption{Character table of $\widetilde{W(I_2(n))}$ ($n$ even) } \label{tb ch even}
\begin{tabular}{l c c c c c c c} 
\hline
Characters    & $1$ & $-1$  &  $f_{\alpha_1}$ & $f_{\alpha_2}$  & $(f_{\alpha_1}f_{\alpha_2})^k$ & $-(f_{\alpha_1}f_{\alpha_2})^k$ &  $(f_{\alpha_1}f_{\alpha_2})^{n/2}$  \\
              &     &       &                 &                  &$(k=1,\ldots,\frac{n-2}{2})$   & $(k=1,\ldots,\frac{n-2}{2})$  &         \\
      \hline
$\trivial$     & $1$  & $1$   & $1$          &       $1$         &          $1$                   &   $1$                      &  $1$ \\  
$\sgn$          & $1$  & $1$   & $-1$         &       $-1$        &          $1$                   &   $1$                      &   $1$     \\  
$\sigma_{\alpha_1}$ & $1$ & $1$ & $-1$         &        $1$         &          $(-1)^k$       &   $(-1)^k$   &      $(-1)^{n/2}$      \\
$\sigma_{\alpha_2}$ & $1$ & $1$ & $1$         &        $-1$         &          $(-1)^k$       &   $(-1)^k$   &      $(-1)^{n/2}$      \\
$\phi_i$   $(i=1,\ldots,\frac{n-2}{2})$        & $2$ & $2$ & $0$          &       $0$          &    $2\cos \frac{2ik\pi}{n}$ & $2\cos \frac{2ik\pi}{n}$ &  $2(-1)^i$ \\   
$\widetilde{\rho}_i$  $(i=1,\ldots,\frac{n}{2})$  & $2$ & $-2$ & $0$       &      $0$          &    $2\cos \frac{(2i-1)k\pi}{n}$ & $-2\cos \frac{(2i-1)k\pi}{n}$ &  0  \\
\hline
\end{tabular}
\end{table}

\bigskip
\noindent
{\it Case of $W=W(H_3)$}

It is well-known the group $W(H_3)$ can be realized as $\Alt_5 \times \mathbb{Z}_2$ such that the longest element $w_0$ of $W(H_3)$ is identified with the order $2$ element in $\mathbb{Z}_2$. Here $\Alt_5$ is the alternating group on $5$ letters. Since there are $5$ irreducible representations for $\Alt_5$, $W$ has $10$ irreducible representations.

We describe the group structure of the double cover $\widetilde{W(H_3)}$. Let $\widetilde{H}$ be the subgroup generated by $f_{\alpha_1}f_{\alpha_2}$ and $f_{\alpha_2}f_{\alpha_3}$. Then $\widetilde{H}$ is isomorphic to the double cover $\widetilde{\Alt}_5=p^{-1}(\Alt_5)$ of $\Alt_5$. 
Moreover, for a lift $\widetilde{w}_0 \in \widetilde{W}$ of $w_0$, $\widetilde{w}_0$ is in the center of $\widetilde{W}$ and $\widetilde{w}_0^2=-1$. The double cover $\widetilde{W}$ is generated by $\widetilde{H}$ and $\widetilde{w}_0$.

Under this identification, the representations of $W$ and $\widetilde{W}$ can be deduced from that of $\Alt_5$ and $\widetilde{\Alt}_5$ respectively. The latter two objects are well known (see, for example, in \cite[Table 1]{Re}). We provide the representations of $\Alt$ and $\widetilde{\Alt}_5$ in Table \ref{tb ch h3} and Table \ref{tb chg h3} for completeness. For each representation $\phi$ of $\Alt_5$, there are two corresponding representations of $W$, denoted by $\phi^+$ and $\phi^-$ such that $\phi^+$ and $\phi^-$ are determined by 
\[ \phi^+(h)=\phi^-(h)=\phi(h) \mbox{ for any $h \in H$ }, \quad \phi^{\pm}(w_0)=\pm 1 , \]
where $H=p(\widetilde{H})$ is identified with $\Alt_5$. 

Similarly, for each genuine representation $\widetilde{\chi}$ of $\widetilde{\Alt}_5$, denote by $\widetilde{\chi}^+$ and $\widetilde{\chi}^-$ the two representations of $\widetilde{W}$ corresponding to $\widetilde{\chi}$ such that 
\[ \widetilde{\chi}^+(h)=\widetilde{\chi}^-(h)=\widetilde{\chi}(h) \mbox{ for any $h \in \widetilde{H}$ }, \quad \widetilde{\chi}^{\pm}(\widetilde{w}_0)=\pm \sqrt{-1} .\]
In particular, there are $8$ genuine irreducible $\widetilde{W}$-representations. 

Let $\tau=(1+\sqrt{5})/2$ and $\bar{\tau}=(1-\sqrt{5})/2$, which will be used in the following Table \ref{tb ch h3} and Table \ref{tb chg h3}.


\begin{table}[!ht]
  \caption{Character table of $\Alt_5$ (identified with $H$) }  \label{tb ch h3} 
\begin{tabular}{c c c c c c }
\hline
 Characters     & $1$ & $s_{\alpha_1}s_{\alpha_2}$  &  $(s_{\alpha_1}s_{\alpha_2})^2$ &   $s_{\alpha_2}s_{\alpha_3}$  &     $s_{\alpha_1}s_{\alpha_3}$ \\
      \hline
$\phi_1$     & $1$  & $1$   & $1$          &       $1$         &          $1$                      \\  
$\phi_4$         & $4$  & $-1$   & $-1$         &       $1$        &          $0$                    \\  
$\phi_3$         & $3$ & $\tau$ & $\bar{\tau}$   &       $0$          &    $-1$             \\
$\overline{\phi}_3$ & $3$ & $\bar{\tau}$ & $\tau$  &    $0$         &     $-1$              \\
$\phi_5$& $5$ & $0$ & $0$         &     $-1$         &     $1$              \\
\hline
\end{tabular} \\
\end{table}

\begin{table}[!ht]
  \caption{Character table of genuine representations of $\widetilde{\Alt}_5$ (identified with $\widetilde{H})$ }  \label{tb chg h3}
  \begin{threeparttable}
\begin{tabular}{c c c c c c }
\hline
 Characters     & $\pm 1$ & $\pm f_{\alpha_1}f_{\alpha_2}$  &  $\pm (f_{\alpha_1}f_{\alpha_2})^2$ &   $\pm f_{\alpha_2}f_{\alpha_3}$  &     $f_{\alpha_1}f_{\alpha_3}$                          \\
      \hline
$\widetilde{\chi}_2$    & $\pm 2$  & $\pm \bar{\tau}$   & $\mp \tau$     &       $\pm 1$        &          $0$                   \\  
$\widetilde{\overline{\chi}}_2$  & $\pm 2$ & $\pm \tau$ & $\mp \bar{\tau}$   &       $\pm 1$          &    $0$             \\
$\widetilde{\chi}_6$ & $\pm 6$ & $\mp 1$ & $\pm 1$  &    $0$         &     $0$          \\
$\widetilde{\chi}_4$ & $\pm 4$ & $\pm 1$ & $\mp 1$         &     $\mp 1$ &     $0$              \\
\hline
\end{tabular} 
\end{threeparttable}
\end{table}
\bigskip

\noindent
{\it Case of $W=W(H_4)$}

Recall the group structure of $\widetilde{W(H_4)}$ described in \cite{Re}. Let $G=\widetilde{\Alt}_5$, and let $G'=G \times G$. Let $\theta: G' \rightarrow G'$ be an automorphism of order $2$ such that $\theta(g_1,g_2)=(g_2,g_1)$. Then the double cover $\widetilde{W(H_4)}$ is isomorphic to the semidirect product $G' \rtimes \langle \theta \rangle$ and has order $28800$. Let $z$ be the unique nontrivial element in the center of $\widetilde{\Alt}_5$. The center $Z$ of $\widetilde{W(H_4)}=G' \rtimes \langle  \theta \rangle$ is $\left\{ 1, (z,z,1) \right\}$ and $W(H_4)$ is isomorphic to the quotient $G'/Z$. Since the character tables for $W(H_4)$ and its double cover is rather lengthy, we refer readers to \cite[Table II(i), (ii), (iii)]{Re}. We shall denote $\widetilde{\chi}_i$ $(i=35,\ldots, 54)$ for the characters of the $20$ genuine irreducible representations of $\widetilde{W}$ and the order of $\widetilde{\chi}_i$ follows the one in \cite[Table II(ii)]{Re}.

\subsection{The spin modules} \label{ss spin mod}

This subsection describes the spin modules for $C(V_0^{\vee})$, which will be used to study the elliptic representations of $W$ in Section \ref{s ellip}. The detail can be found in, for example, \cite[Section 2.2]{HP}. To construct a spin module, it is easier to start with the complex Clifford algebra $C(V^{\vee})$ of $V^{\vee}$, which is similarly defined as the quotient of the tensor algebra $T(V^{\vee})$ by the ideal generated by elements of the form $\omega \otimes \omega'+\omega'\otimes \omega+2\langle \omega, \omega' \rangle$, for $\omega, \omega' \in V^{\vee}$. The real Clifford algebra $C(V_0^{\vee})$ naturally embeds into $C(V^{\vee})$ as a real subalgebra.

Assume first that $\dim_{\mathbb{C}} V^{\vee}=2m$ is even.  Let $U$ and $U^*$ be $m$-dimensional isotropic subspaces of $V^{\vee}$ with respect to $\langle , \rangle$ so that $V^{\vee}=U \oplus U^*$. 
The simple $C(V^{\vee})$-module $S$ which has $2^m$ dimension is isomorphic to the exterior algebra $\wedge^{\bullet} U$ of $U$ with the action of $C(V^{\vee})$ on $\wedge^{\bullet} U$ determined by the follows:
\begin{align*}
  u \cdot (u_1 \wedge \ldots \wedge u_k) &=u \wedge u_1 \wedge \ldots \wedge u_k, \quad \mbox{ for } u \in U.  \\
    u^* \cdot (u_1 \wedge \ldots \wedge u_k)&=\sum_{i=1}^k (-1)^i 2\langle u^*,u_i\rangle u_1 \wedge \ldots \wedge \hat{u}_i \wedge \dots \wedge u_k,  \quad \mbox{ for } u^* \in U^*.  
\end{align*}
The restriction of $S$ to $C(V_0^{\vee})$ is still an irreducible complex module. The further restriction of $S$ to the even part $C(V_0^{\vee})_{\mathrm{even}}$, however, splits into two inequivalent modules $S^+$ and $S^-$, each of which has dimension $2^{m-1}$ (the choice is arbitrary for the later convenience). Moreover, the restriction of $S$ to the group $\widetilde{W}$ is again irreducible and denoted by $(\gamma, S)$. Similarly, the restriction of $S^+$ and $S^-$ to the group $\widetilde{W}'$ gives rise two inequivalent irreducible $\widetilde{W}'$-representations, denoted again $S^+$ and $S^-$ respectively.

Assume $\dim_{\mathbb{C}} V^{\vee}=2m+1$ is odd. Let $u_1,\ldots, u_{2m+1}$ be an orthonormal basis for $V^{\vee}$. Let $\overline{V}$ be the complex subspace spanned by $u_1,\ldots, u_{2m}$ and let $\overline{U}$ be the complex subspace spanned by $u_{2m+1}$. Then $V^{\vee}=\overline{V}\oplus \overline{U}$. Let $S$ be the spin module for $C(\overline{V})$ as in the case of even dimension. We may define that $u_{2m+1}$ acts on $S$ by $i$ or $-i$ so that $S$ turns into two inequivalent $C(V^{\vee})$ modules of dimension $2^m$, denoted by $S^+$ and $S^-$ (the choice is again arbitrary). These modules remain irreducible when restricted to $C(V_0^{\vee})$. Moreover, the further restriction of $S^+$ and $S^-$ to $\widetilde{W}'$ are still irreducible and are again denoted by $S^+$ and $S^-$.





The following lemma will be used in Section \ref{s ellip}.
\begin{lemma} \label{lem spin idt}
Let
\[ \wedge^{\pm} V = \sum_i (-1)^i \wedge^i V \]
as a virtual representation of $\widetilde{W}'$. Then in the representation ring of $\widetilde{W}'$,
\[ (S^+-S^-)\otimes (S^+-S^-)^* =\frac{2 }{[W:W']} {\wedge}^{\pm} V ,\]
where $(S^+-S^-)^*$ is the dual of $S^+-S^-$ and $[W:W']$ is the index of $W'$ in $W$.
\end{lemma}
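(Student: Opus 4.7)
The plan is to prove the lemma by comparing characters of both sides at each element of $\widetilde{W}'$. For $\widetilde{w} \in \widetilde{W}'$ with $w = p(\widetilde{w}) \in W'$, the standard Euler-characteristic identity gives
\[
\tr(\widetilde{w}, \wedge^\pm V) = \sum_i (-1)^i \tr(w, \wedge^i V) = \det(I - w, V),
\]
while $(S^+ - S^-) \otimes (S^+ - S^-)^*$ has character $|\chi_{S^+}(\widetilde{w}) - \chi_{S^-}(\widetilde{w})|^2$. So the lemma reduces to verifying
\[
|\chi_{S^+}(\widetilde{w}) - \chi_{S^-}(\widetilde{w})|^2 = \frac{2}{[W:W']} \det(I - w, V)
\]
for all $\widetilde{w} \in \widetilde{W}'$.

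In the even-dimensional case, $\widetilde{W}' = \widetilde{W}_{\mathrm{even}}$ with $[W:W'] = 2$. Such $\widetilde{w}$ corresponds to $w \in SO(V)$ whose eigenvalues come in conjugate pairs $\{e^{\pm i\theta_k}\}_{k=1}^m$, and the classical spin-character formula gives $\chi_{S^+}(\widetilde{w}) - \chi_{S^-}(\widetilde{w}) = \pm \prod_{k=1}^m (e^{i\theta_k/2} - e^{-i\theta_k/2})$. Its modulus squared equals $\prod_k (1 - e^{i\theta_k})(1 - e^{-i\theta_k}) = \det(I - w, V)$, matching $2/[W:W'] = 1$. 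Structurally, this can also be derived from the $\mathbb{Z}_2$-graded isomorphism $\End(S) \cong C(V^\vee)$ combined with the Chevalley identification $C(V^\vee) \cong \wedge^\bullet V^\vee$ as $W$-modules: taking the alternating sum of the gradings yields $(S^+ - S^-) \otimes (S^+ - S^-)^* \cong C(V^\vee)_{\mathrm{even}} - C(V^\vee)_{\mathrm{odd}} \cong \wedge^\pm V^\vee \cong \wedge^\pm V$, using that the $W$-invariant inner product on $V_0^\vee$ identifies $V^\vee \cong V$ as $W$-modules.

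In the odd-dimensional case, $\widetilde{W}' = \widetilde{W}$ with $[W:W'] = 1$, and I would split by the Clifford parity of $\widetilde{w}$. If $\widetilde{w}$ is even (so $w \in SO(V)$), then $w$ fixes a nonzero vector, forcing $\det(I - w, V) = 0$; concurrently $S^+$ and $S^-$ restrict to isomorphic $C(V^\vee)_{\mathrm{even}}$-modules---the unique simple module of this matrix algebra---so $\chi_{S^+}(\widetilde{w}) = \chi_{S^-}(\widetilde{w})$ and both sides vanish. If $\widetilde{w}$ is odd (so $w \in O(V) \setminus SO(V)$), the eigenvalue $-1$ of $w$ contributes a factor of $2$ in $\det(I - w, V)$, and a parallel spin-character calculation yields $|\chi_{S^+}(\widetilde{w}) - \chi_{S^-}(\widetilde{w})|^2 = 2\det(I - w, V)$, matching $2/[W:W'] = 2$. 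The main subtlety is justifying this extra factor of $2$; it traces back to the structural fact that the odd-dim Clifford algebra decomposes as $C(V^\vee) \cong \End(S^+) \oplus \End(S^-)$ into two matrix blocks indexed by the two central characters, rather than forming a single matrix algebra as in the even-dim case. Once this factor is extracted, the verification in each parity class is routine.
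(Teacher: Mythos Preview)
Your argument is correct. The paper does not actually prove this lemma; its entire proof reads ``See the discussion in \cite[Chapter II 6]{BW}.'' So there is no in-paper argument to compare against, and what you have supplied is a genuine proof where the paper only gives a pointer.

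Your character computation is the standard route, and it is essentially what one extracts from the Borel--Wallach reference. A couple of remarks. First, the structural argument you sketch in the even case---$\End(S)\cong C(V^{\vee})$ as $\mathbb{Z}_2$-graded algebras, combined with the $O(V^{\vee})$-equivariant Chevalley identification $C(V^{\vee})\cong \wedge^\bullet V^{\vee}$---is in fact the cleanest uniform proof and works without the explicit eigenvalue bookkeeping; this is closer in spirit to what Borel--Wallach do. Second, in the odd case your handling of the two parities is right: on even elements, $S^+$ and $S^-$ restrict to the unique simple $C(V^{\vee})_{\mathrm{even}}$-module (since $C(V^{\vee})_{\mathrm{even}}$ is a full matrix algebra of rank $2^m$), so their characters agree there; on odd elements, the factor of $2$ you flag comes precisely from the eigenvalue $-1$ of $w$ contributing $(1-(-1))=2$ to $\det(I-w)$, matched on the spin side by the fact that $S^+\oplus S^-$ (rather than a single $S$) realises $C(V^{\vee})$. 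Your one-line justification via $C(V^{\vee})\cong \End(S^+)\oplus\End(S^-)$ is exactly the right structural reason, and the verification is indeed routine from there.
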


\begin{proof}
See the discussion in \cite[Chapter II 6]{BW}.

\end{proof}

\begin{remark} \label{rmk spin reps}
We shall follow Section \ref{ss dc characters} for the notation of representations.
The spin module $S$ in each noncrystallographic case is as follows:
\[ I_2(n)\ (n \mbox{ odd} ): \widetilde{\rho}_{(n-1)/2},\quad I_2(n)\ (n \mbox{ even}): \widetilde{\rho}_{n/2}, \]
\[ H_3: \widetilde{\overline{\chi}}_2^+ \mbox{ or } \widetilde{\overline{\chi}}_2^-,\quad H_4: \widetilde{\chi}_{36}. \]
\end{remark}




\section{Distinguished points and solvable points} \label{s def sol}

\subsection{Definitions of distinguished and solvable points} \label{ss def sol}

Recall some terminology from the introduction. Fix a $W$-invariant parameter function $c: R \rightarrow \mathbb{R}$ and write $c_{\alpha}$ for $c(\alpha)$. 

\begin{definition} \cite[Definition 1.4]{HO}
A point $\gamma$ in $V^{\vee}$ is said to be {\it distinguished} if it satisfies
\[   | \left\{ \alpha \in R : (\alpha, \gamma)=c_{\alpha} \right\}| =|\left\{ \alpha \in R : (\alpha, \gamma)=0 \right\}|+|\Delta| .\]
\end{definition}
\noindent
(In fact, any distinguished points are also in $V_0^{\vee}$ and so we may replace $V^{\vee}$ by $V_0^{\vee}$ in the above definition as in the introduction.)

For $J \subset \Delta$, let $V_J$ (resp. $V^{\vee}_J$) be the complex subspace of $V$ (resp. $V^{\vee}$) spanned by the simple roots (resp. coroots) in $J$ and let $R_J=V_J \cap R$ (resp. $R_J^{\vee}=V_J^{\vee} \cap R^{\vee}$). 

\begin{definition}
A point $\gamma$ in $V^{\vee}$ is said to be {\it nilpotent} if it is conjugate to a point $\gamma' \in V_J^{\vee}$ for some $J \subseteq \Delta$ such that $\gamma'$ is a distinguished point in $V_J^{\vee}$ or equivalently, 
\[   | \left\{ \alpha \in R_J : (\alpha, \gamma')=c_{\alpha} \right\}| =|\left\{ \alpha \in R_J : (\alpha, \gamma')=0 \right\}|+|J| .\]
We shall say that $\gamma$ is a nilpotent point associated to $J$ if we want to emphasize the role of $J$.

A point $\gamma$ in $V^{\vee}$ is said to be {\it solvable} if $\gamma$ is nilpotent associated to some subset $J$ of $\Delta$ and $\gamma$ further satisfies 
\[   | \left\{ \alpha \in R : (\alpha, \gamma)=c_{\alpha} \right\}| =|\left\{ \alpha \in R : (\alpha, \gamma)=0 \right\}|+|J| .\]
Let $\mathcal V_{\sol}$ be the set of $W$-orbits of solvable points in $V^{\vee}$.
\end{definition}

The terminology of solvable points is motivated by Theorem \ref{thm def sol} below. We first introduce few notations. Assume $R$ is crystallographic. Set $c_{\alpha}=2$ for all $\alpha \in R$. Let $\mathfrak{g}$ be the semisimple Lie algebra corresponding to $R$. Let $G$ be the connected Lie group of adjoint type over $\mathbb{C}$ with the Lie algebra $\mathfrak{g}$. Fix a Cartan subalgebra $\mathfrak{h} \cong V^{\vee}$. 
For a nilpotent orbit $\mathcal O$ in $\mathfrak{g}$, let $e \in \mathcal O$ and let $\left\{e,h,f \right\}$ be a Jacobson-Morozov standard triple such that 
\[ [h,e]=2e, \quad [h,f]=-2f, \quad [e,f]=h. \]
The semisimple element $h$ depends on the choices involved. However, due to a result of Kostant, the adjoint orbit of $h$ in $\mathfrak{g}$ is independent of the choices. Thus we may associate to each nilpotent orbit $\mathcal O$ a semisimple orbit, denoted by $\mathcal O_h$, via a Jaocbson-Morozov standard triple and indeed this association is an injection (\cite[Theorem 3.5.4]{CM}). Moreover, this association gives rise to the following important map:
\[ \Pi : \mbox{ set of nilpotent orbits in } \mathfrak{g} \rightarrow \mbox{ set of $W$-orbits of nilpotent points in } V^{\vee}, \]
\[ \mathcal O \mapsto  \mathfrak{h} \cap \mathcal O_h  .\]
Note that the intersection $\mathfrak{h} \cap \mathcal O_h$ above is indeed a single $W$-orbit. This follows from the standard fact that two elements in $\mathfrak{h}$ is in the same $W$-orbit if and only if they are in the same adjoint orbit in $\mathfrak{g}$ (\cite[Theorem 2.2.4]{CM}).

From the classification of nilpotent orbits in the Bala Carter theory (see \cite[Lemma 8.2.1, Theorem 8.2.12]{CM}), we have:
\begin{lemma} \label{lem ref bala}
The map $\Pi$ is a well-defined bijection. 
\end{lemma}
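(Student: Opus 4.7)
The strategy is to verify well-definedness, injectivity, and surjectivity of $\Pi$ in sequence, by combining the Bala-Carter classification of nilpotent orbits [CM, Thm.~8.2.12] with the dimension criterion [CM, Lem.~8.2.1] for distinguished nilpotents and Kostant's uniqueness theorem for the semisimple element of a Jacobson-Morozov triple. All three pieces are invariants of the $W$-orbit (resp.\ $G$-orbit) on the Cartan side (resp.\ Lie algebra side), so the argument is really a translation between the combinatorial and the Lie-theoretic descriptions.

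\textbf{Well-definedness and injectivity.} For a nilpotent orbit $\mathcal{O}$, Bala-Carter furnishes, up to $G$-conjugacy, a standard Levi $\mathfrak{l}_J$ and a distinguished nilpotent $e \in [\mathfrak{l}_J, \mathfrak{l}_J]$ with $\mathcal{O}=G \cdot e$. Extend $e$ to a JM-triple $\{e,h,f\}$ inside $[\mathfrak{l}_J, \mathfrak{l}_J]$ with $h \in \mathfrak{h}_J := \mathfrak{h}\cap [\mathfrak{l}_J, \mathfrak{l}_J] \subseteq V_J^{\vee}$. The criterion [CM, Lem.~8.2.1] for $e$ to be distinguished in $[\mathfrak{l}_J, \mathfrak{l}_J]$ reads $\dim ([\mathfrak{l}_J, \mathfrak{l}_J])_0 = \dim ([\mathfrak{l}_J,\mathfrak{l}_J])_2$ (eigenspaces of $\ad h$), and decomposing each side into the Cartan part $\dim \mathfrak{h}_J = |J|$ and the root-space contributions gives
\[ |J| + |\{\alpha \in R_J : (\alpha, h) = 0\}| = |\{\alpha \in R_J : (\alpha, h) = 2\}|, \]
which is exactly the condition that $h$ be distinguished in $V_J^{\vee}$ (since $c \equiv 2$). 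Hence $\Pi(\mathcal{O})$ is a well-defined $W$-orbit of nilpotent points. For injectivity, $\Pi(\mathcal{O}_1) = \Pi(\mathcal{O}_2)$ says the two $h$-elements are $W$-conjugate and therefore $G$-conjugate ([CM, Thm.~2.2.4]); Kostant's theorem [CM, Thm.~3.5.4] then forces $\mathcal{O}_1 = \mathcal{O}_2$.

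\textbf{Surjectivity.} Given a nilpotent point $\gamma$, conjugate to arrange that $\gamma$ is distinguished in $V_J^{\vee}$ for some $J \subseteq \Delta$. Reversing the bookkeeping above, the combinatorial equality becomes $\dim ([\mathfrak{l}_J,\mathfrak{l}_J])_0 = \dim ([\mathfrak{l}_J,\mathfrak{l}_J])_2$ for the $\ad \gamma$-grading of $[\mathfrak{l}_J, \mathfrak{l}_J]$. The converse direction in [CM, Lem.~8.2.1], applied factor by factor to $[\mathfrak{l}_J, \mathfrak{l}_J]$ together with the enumeration of distinguished weighted Dynkin diagrams furnished by [CM, Thm.~8.2.12], then exhibits a distinguished nilpotent $e \in [\mathfrak{l}_J, \mathfrak{l}_J]$ whose JM-middle is $W_J$-conjugate to $\gamma$. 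Setting $\mathcal{O} := G\cdot e$ gives $\Pi(\mathcal{O}) = W \cdot \gamma$.

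\textbf{Main obstacle.} The delicate step is the converse in surjectivity: the Heckman-Opdam combinatorial identity does not by itself impose either integrality of $(\alpha_i, \gamma)$ or the dominance pattern with weights in $\{0,1,2\}$ that characterizes a weighted Dynkin diagram. Bridging this gap requires knowing that every $\gamma \in V_J^{\vee}$ satisfying the combinatorial equality is $W_J$-conjugate to the $h$-element of a genuine JM-triple; in practice this is where one invokes the Bala-Carter tables of distinguished diagrams in each simple factor to match the two notions. All other parts of the argument are routine reductions using standard structure theory.
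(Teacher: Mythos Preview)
Your proposal is correct and aligns with the paper's approach: the paper gives no proof beyond citing \cite[Lemma 8.2.1, Theorem 8.2.12]{CM}, and your argument is precisely an unpacking of that citation, translating the Bala--Carter dimension criterion into the Heckman--Opdam combinatorial condition and invoking Kostant's theorem for injectivity. Your honest identification of the surjectivity step as ultimately resting on the Bala--Carter tables (i.e., \cite[Theorem 8.2.12]{CM}) is accurate and is exactly what the paper's citation is doing implicitly.
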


The following result says which nilpotent orbits are mapped to the $W$-orbits of solvable points via $\Pi$:



\begin{theorem} \label{thm def sol}
Let $R$ be a crystallographic root system and set $c \equiv 2$. Let $\mathcal N_{\sol}$ be the set of nilpotent orbits in $\mathfrak{g}$ whose elements have a solvable centralizer. Then $\Pi(\mathcal N_{\sol})=\mathcal V_{\sol}$. In particular, there is a one-to-one correspondence between the set $\mathcal V_{\sol}$ and the set $\mathcal N_{\sol}$. 
\end{theorem}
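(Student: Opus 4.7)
The plan is to use Bala-Carter theory together with the Jacobson-Morozov $\mathfrak{sl}_2$-module structure on $\g$ to translate the solvability of the centralizer of a nilpotent element directly into the combinatorial identity that defines solvable points. Since Lemma \ref{lem ref bala} already supplies the bijection $\Pi$ between nilpotent orbits and $W$-orbits of nilpotent points, it suffices to show that under $\Pi$ the property ``$Z_G(e)$ is solvable'' corresponds exactly to the property ``$h$ is a solvable point.''

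First, by Bala-Carter every nilpotent orbit $\mathcal O$ arises (uniquely up to $W$-conjugacy) from a pair $(\l_J, \mathcal O_J)$, where $J \subseteq \Delta$ and $\mathcal O_J$ is a distinguished orbit in $[\l_J, \l_J]$. Choose a Jacobson-Morozov triple $\{e,h,f\}$ for $\mathcal O_J$ inside $[\l_J, \l_J]$, so that $h \in V_J^{\vee}$. The combinatorial characterization of distinguished orbits then gives
\begin{equation*}
|\{\alpha \in R_J : (\alpha, h) = 2\}| = |\{\alpha \in R_J : (\alpha, h) = 0\}| + |J|,
\end{equation*}
so $\Pi(\mathcal O) = W \cdot h$ is automatically a nilpotent point associated to $J$. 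The substance of the theorem is that the analogous identity taken over the whole of $R$ (not only $R_J$) is equivalent to the centralizer condition.

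Next I would analyze $\g$ as an $\mathfrak{sl}_2$-module under $\{e,h,f\}$. Writing $\g = \bigoplus_i \g_i$ for the $\ad h$-grading, a standard weight count on the irreducible $\mathfrak{sl}_2$-summands shows that the number of trivial summands equals $\dim \g_0 - \dim \g_2$, which in the present notation reads
\begin{equation*}
\dim Z_{\g}(\{e,h,f\}) = r + |\{\alpha \in R : (\alpha, h) = 0\}| - |\{\alpha \in R : (\alpha, h) = 2\}|.
\end{equation*}
The Mostow-Jacobson decomposition $\g^e = Z_{\g}(\{e,h,f\}) \ltimes \n^e$ exhibits $Z_{\g}(\{e,h,f\})$ as the reductive Levi part of $\g^e$, so $\g^e$ is solvable iff this reductive part is abelian, equivalently a torus. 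Since $e \in [\l_J, \l_J]$ is distinguished there, its $\mathfrak{sl}_2$-centralizer inside $[\l_J,\l_J]$ is zero, hence one always has the inclusion $\z(\l_J) \subseteq Z_{\g}(\{e,h,f\})$ and $\z(\l_J)$ is a torus of dimension $r - |J|$. Consequently the reductive part is a torus iff the inclusion is an equality, iff $\dim Z_{\g}(\{e,h,f\}) = r - |J|$; substituting the formula above produces exactly the defining identity for $h$ to be solvable.

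Combining the two steps, $\mathcal O \in \mathcal N_{\sol}$ iff $\Pi(\mathcal O) \in \mathcal V_{\sol}$, which yields $\Pi(\mathcal N_{\sol}) = \mathcal V_{\sol}$ and hence the asserted bijection. I expect the main technical issue to be bookkeeping in the Bala-Carter step: one must verify that the subset $J$ detected by the combinatorial solvability identity at $h$ is (up to $W$-conjugacy) the same $J$ that appears in the Bala-Carter parameterization of $\mathcal O$. This is controlled by the uniqueness clause of Bala-Carter together with the fact that a given $h$ is distinguished in $V_J^{\vee}$ for essentially one such $J$ up to $W$-conjugacy.
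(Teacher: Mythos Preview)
Your proposal is correct and follows essentially the same route as the paper. The paper packages the argument as Proposition~\ref{prop eqv def}, proving four equivalent conditions $(1)\Leftrightarrow(2)\Leftrightarrow(3)\Leftrightarrow(4)$ and then invoking $(1)\Leftrightarrow(4)$; your argument is exactly the direct path $(1)\Leftrightarrow(2)\Leftrightarrow(4)$ in that proposition, using the same ingredients (surjectivity of $\ad_e:\g_0\to\g_2$ from $\mathfrak{sl}_2$-theory, the Levi decomposition $\g^e=\l^e\oplus\n^e$, and minimality of $\l_J$ to force $\l^e=\z(\l_J)$ once $\l^e$ is toral). Your closing remark about matching the $J$ in the solvable-point definition with the Bala--Carter $J$ is well placed: the point is that ``$h$ distinguished in $V_{J'}^\vee$'' is exactly the combinatorial shadow of ``$e$ distinguished in $\l_{J'}$'', which in turn characterizes $\l_{J'}$ as a minimal Levi for $e$, so the two $J$'s agree up to $W$-conjugacy and no ambiguity arises.
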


A proof of Theorem \ref{thm def sol} is given after Proposition \ref{prop eqv def}.



Before stating Proposition \ref{prop eqv def}, we review few facts about Jacobson-Morozov triples (see \cite[Chapter 3]{CM} for the details). Let $\left\{ e,h, f\right\}$ be a Jacobson-Morozov triple. By the representation theory of $\mathfrak{sl}_2$, we have a $\mathbb{Z}$-grading on $\mathfrak{g}$:
\begin{equation} \label{eqn grad sl2}
 \mathfrak{g}=\bigoplus_{i \in \mathbb{Z}} \mathfrak{g}_i , 
\end{equation} 
where $\mathfrak{g}_i=\left\{ Z \in \mathfrak{g} : [h,Z]=iZ \right\}$. Let $\mathfrak{l}=\mathfrak{g}_0$ and let $\mathfrak{n}=\oplus_{i>0} \mathfrak{g}_i$. Then the parabolic subalgebra $\mathfrak{p}=\mathfrak{l}\oplus \mathfrak{n}$ of $\mathfrak{g}$ is called the Jacobson-Morozov parabolic subalgebra corresponding to $\left\{e, h, f\right\}$. Let $\mathfrak{n}_{\mathrm{even}}=\oplus_{i >0} \mathfrak{g}_{2i}$.


For a Lie subalgebra $\mathfrak{c} \subset \mathfrak{g}$, write $\mathfrak{g}^{\mathfrak{c}}$ for the centralizer of $\mathfrak{c}$ in $\mathfrak{g}$ i.e. 
\[ \mathfrak{g}^{\mathfrak{c}}=\left\{ X \in \mathfrak{g}: [X,Z]=0 \mbox{ for all } Z\in \mathfrak{c} \right\}. \] For an element $e \in \mathfrak{g}$, write $\mathfrak{g}^e$ for the centralizer of $e$ in $\mathfrak{g}$.

By the representation theory of $\mathfrak{sl}_2$,
$\mathfrak{g}^e$ is the sum of the highest weight spaces of $\mathfrak{sl}_2$-modules. Hence, 
\[ \mathfrak{g}^e = \bigoplus_{i \geq 0} \mathfrak{g}_i^e . \]
Using the representation theory of $\mathfrak{sl}_2$ again, we could see that 
\[ \mathfrak{l}^e=\mathfrak{g}_0^e=\left\{ X \in \mathfrak{g} : [X,Z]=0 \mbox{ for any } Z \in \langle e,h,f \rangle \right\} ,\]
where $\langle e, h, f \rangle$ is the Lie subalgebra of $\mathfrak{g}$ generated by $e, h, f$. 

For a subset $J \subset \Delta$, let $\mathfrak{l}_J=\mathfrak{h} \oplus \sum_{\alpha \in R_J} \mathfrak{g}_{\alpha}$ be the Levi subalgebra associated to $J$, where $\mathfrak{g}_{\alpha}$ is the root space corresponding to $\alpha$. 

\begin{proposition} \label{prop eqv def}
Let $\mathcal O$ be a nilpotent orbit in $\mathfrak{g}$. Let $e$ be an element in $\mathcal O$ such that the minimal Levi subalgebra containing $e$ is equal to $\mathfrak{l}_J$ for some $J \subseteq \Delta$ and a Jacobson-Morozov triple $\left\{ e, h, f \right\}$ is  in $\mathfrak{l}_J$ with $h \in \mathfrak{h}$. Let $\mathfrak{p}=\mathfrak{l}\oplus \mathfrak{n}$ be the Jacobson-Morozov parabolic subalgebra corresponding to $\left\{e,h,f \right\}$.  Then the following conditions are equivalent:
\begin{itemize}
\item[(1)]  $\mathcal O \in \mathcal N_{\mathrm{sol}}$ (notation in Theorem \ref{thm def sol}).
\item[(2)]  $\mathfrak{l}^e = \mathfrak{g}^{\mathfrak{l}_J} $ (and so $\mathfrak{l}^e \subset \mathfrak{h}$).
\item[(3)]  $\dim_{\mathbb{C}} \mathfrak{l}/\mathfrak{g}^{\mathfrak l_J} = \dim_{\mathbb{C}} \mathfrak{n}_{\mathrm{even}}/[\mathfrak{n}_{\mathrm{even}},\mathfrak{n}_{\mathrm{even}}]$.
\item[(4)]  $| \left\{ \alpha \in R : (\alpha, h)=2 \right\}| =|\left\{ \alpha \in R : (\alpha, h)=0 \right\}|+|J|$ .
\end{itemize}
\end{proposition}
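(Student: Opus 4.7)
My approach is to prove the chain $(3)\Leftrightarrow(4)\Leftrightarrow(2)\Leftrightarrow(1)$, isolating the conceptual content in the last implication. The first three conditions are equivalent by elementary dimension counting together with $\mathfrak{sl}_2$-representation theory; the final implication rests on the Bala--Carter description of the reductive centralizer.

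First, I show that (3), (4), and (2) all reduce to the single equality
\[ \dim_{\mathbb C}\l/\g^{\l_J}=\dim_{\mathbb C}\g_2. \]
For (4), this is a direct root count: $|\{\alpha\in R:(\alpha,h)=2\}|=\dim_{\mathbb C}\g_2$, $|\{\alpha\in R:(\alpha,h)=0\}|=\dim_{\mathbb C}\l-\dim_{\mathbb C}\h$, and $\dim_{\mathbb C}\g^{\l_J}=\dim_{\mathbb C}\h-|J|$ (since $\g^{\l_J}\subseteq\h$ is the orthogonal complement of $V_J$ there). For (3), the surjectivity of $\ad(e):\g_{2i}\to\g_{2i+2}$ for $i\geq 0$, coming from $\mathfrak{sl}_2$-theory, gives $[\n_{\mathrm{even}},\n_{\mathrm{even}}]=\bigoplus_{i\geq 2}\g_{2i}$, so $\dim_{\mathbb C}\n_{\mathrm{even}}/[\n_{\mathrm{even}},\n_{\mathrm{even}}]=\dim_{\mathbb C}\g_2$. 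For (2), the same surjectivity applied to $\ad(e):\l=\g_0\to\g_2$ yields $\dim_{\mathbb C}\l^e=\dim_{\mathbb C}\l-\dim_{\mathbb C}\g_2$; combined with the automatic containment $\g^{\l_J}\subseteq\l^e$ (elements centralizing $\l_J$ centralize $e,h,f\in\l_J$), the set equality in (2) is equivalent to this dimension equality.

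For $(1)\Leftrightarrow(2)$, I invoke the standard decomposition $\g^e=\l^e\oplus\n^e$ with $\l^e$ reductive and $\n^e$ nilpotent, so $\g^e$ is solvable iff $\l^e$ is abelian. The key fact from Bala--Carter theory is that the maximal torus of the reductive centralizer of $e$ equals the central torus of the minimal Levi containing $e$; in our setting this says that the maximal torus of $\l^e$ is $\g^{\l_J}=Z(\l_J)$. Once this is known, since $\l^e$ is reductive and $\g^{\l_J}\subseteq\l^e$ is already a maximal torus, $\l^e$ is abelian iff it equals its maximal torus, i.e.\ iff $\l^e=\g^{\l_J}$.

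The main obstacle is the torus-identification step in the last paragraph. It is the only point at which the Bala--Carter theorem genuinely enters, and the underlying conjugation argument (showing that any semisimple element of $\g^e$ can be transported into $\g^{\l_J}$ by a $G^e$-translation) must be carried out carefully because different minimal Levis containing $e$ are $G^e$-conjugate but need not be literally equal. Everything else reduces either to elementary root counting or to standard surjectivity properties of $\ad(e)$ from $\mathfrak{sl}_2$-representation theory.
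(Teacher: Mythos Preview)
Your proposal is correct and follows essentially the same route as the paper. The reductions of (2), (3), (4) to the single identity $\dim_{\mathbb C}\l/\g^{\l_J}=\dim_{\mathbb C}\g_2$ via $\mathfrak{sl}_2$-surjectivity and root counting match the paper's arguments exactly (just organized more symmetrically). For $(1)\Leftrightarrow(2)$ the only difference is packaging: you invoke as a black box the Bala--Carter fact that $\g^{\l_J}=Z(\l_J)$ is a maximal torus of $\l^e$, whereas the paper proves the needed inequality $\dim\l^e\leq\dim\g^{\l_J}$ inline under hypothesis (1) by noting that $\l^e$ is then toral, so $\g^{\l^e}$ is a Levi containing $e$ and hence contains a conjugate of the minimal Levi $\l_J$. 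Both arguments rest on the same minimality principle, so this is a cosmetic rather than a substantive difference.
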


\begin{proof}
$(1) \Rightarrow (2)$: By \cite[Lemma 3.7.3]{CM}, $\mathfrak{l}^e$ is reductive. Thus by our assumption (1), $\mathfrak{l}^e$ is contained in some toral subalgebra. For notational simplicity, set $\mathfrak{c}=\mathfrak{l}^{e}$. Then $\mathfrak{g}^{\mathfrak{c}}$ is a Levi subalgebra containing $e$ and so contains some conjugate of $\mathfrak{l}_J$. Then $\mathfrak{c}$ is contained in some conjugate of $\mathfrak{g}^{\mathfrak{l}_J}$. Hence $\dim_{\mathbb{C}} \mathfrak{c} \leq \dim_{\mathbb{C}} \mathfrak{g}^{\mathfrak{l}_J}$. However, we also have $\mathfrak{g}^{\mathfrak{l}_J}=\mathfrak{g}_0^{\mathfrak{l}_J} \subseteq \mathfrak{l}^e=\mathfrak{c}$, where the first equality follows from $h \in \mathfrak{l}_J$, and the second inclusion follows from $e \in \mathfrak{l}_J$. This forces $\mathfrak{c}=\mathfrak{g}^{\mathfrak{l}_J}$. Note that $\mathfrak{g}^{\mathfrak{l}_J} \subseteq \mathfrak{h}$ and so is $\mathfrak{l}^e$.

$(2) \Rightarrow (1)$: By \cite[Lemma 3.7.3]{CM} again, $\mathfrak{g}^e=\mathfrak{l}^{e}\oplus \mathfrak{n}^e$. Then $\mathfrak{g}^{e} \subseteq \mathfrak{h} \oplus \mathfrak{n}$ and so $\mathfrak{g}^e$ is solvable.

$(2) \Rightarrow (3)$:  Recall that $\mathfrak{g}_0=\mathfrak{l}$. Then, by the representation theory of $\mathfrak{sl}_2$, $\ad_e(\mathfrak{l})=\mathfrak{g}_2$. Thus we have $\dim_{\mathbb{C}} \mathfrak{l}-\dim_{\mathbb{C}}\mathfrak{l}^{e}=\dim_{\mathbb{C}} \mathfrak{g}_2$. Since $e \in \mathfrak{n}_{\mathrm{even}}$, $\mathfrak{g}_4=[\mathfrak{n}_{\mathrm{even}},\mathfrak{n}_{\mathrm{even}}]$. Then we also have $\dim_{\mathbb{C}} \mathfrak{g}_2=\dim_{\mathbb{C}} \mathfrak{n}_{\mathrm{even}}/[\mathfrak{n}_{\mathrm{even}},\mathfrak{n}_{\mathrm{even}}]$. Combining equations, we get $\dim_{\mathbb{C}} \mathfrak{l}-\dim_{\mathbb{C}} \mathfrak{l}^{e}=\dim_{\mathbb{C}} \mathfrak{n}_{\mathrm{even}}/[\mathfrak{n}_{\mathrm{even}},\mathfrak{n}_{\mathrm{even}}]$. Now we apply $(2)$ to obtain (3).

$(3) \Rightarrow (2)$: We have seen that $\mathfrak{g}^{\mathfrak{l}_J} \subseteq \mathfrak{l}^{e}$ when proving $(1) \Rightarrow (2)$. Then use the last equality in the argument of $(2) \Rightarrow (3)$ and apply (3) together to obtain (2).

$(3) \Leftrightarrow (4)$ follows from the following three relations: $  |J|= \dim_{\mathbb{C}} \mathfrak{h}-\dim_{\mathbb{C}} \mathfrak{g}^{\mathfrak{l}_J}$,
 \begin{eqnarray*}
 | \left\{ \alpha \in R : (\alpha, h)=2 \right\}| &=&\dim_{\mathbb{C}} \mathfrak{g}_2=\dim_{\mathbb{C}} \mathfrak{n}_{\mathrm{even}}/[\mathfrak{n}_{\mathrm{even}},\mathfrak{n}_{\mathrm{even}}], \\
 |\left\{ \alpha \in R : (\alpha, h)=0 \right\}|&=&\dim_{\mathbb{C}} \mathfrak{l}/\mathfrak{h}=\dim_{\mathbb{C}} \mathfrak{l}-\dim_{\mathbb{C}} \mathfrak{h}.
 \end{eqnarray*}
\end{proof}

\noindent
{\it Proof of Theorem \ref{thm def sol}.}
The statement follows from Lemma \ref{lem ref bala} and the equivalent conditions (1) and (4) in Proposition \ref{prop eqv def}.
\qed





\subsection{Classification of solvable points}

For the crystallographic cases (with the parameter function $c \equiv 2$), classifying solvable points is equivalent to classifying nilpotent orbits whose centralizer has abelian reductive part (Condition (2) of Proposition \ref{prop eqv def}). The reductive part of the centralizers has been computed for a long time in the literature (see for example \cite{CM} and \cite{Ca}). This goes back to the work of Springer and Steinberg for classical groups, and Alekseev and others for exceptional groups. Then a complete list of nilpotent orbits in $\mathcal N_{\mathrm{sol}}$ can be computed accordingly (also see tables in \cite{Ci}).

We classify all the solvable points for noncrystallographic root systems in the following Proposition \ref{prop class sol}, based on the classification of distinguished points by Heckman and Opdam \cite[Section 4]{HO}.

\begin{proposition} \label{prop class sol}
Let $R$ be a noncrystallographic root system. The set of solvable points in $V^{\vee}$ is described below.
\begin{itemize}
\item[(a)] Suppose $c_{\alpha} \neq 0$ for some $\alpha \in R$. 
\begin{itemize}
\item[(1)] $I_2(n)$ case: Recall $\alpha_1$ and $\alpha_2$ form a basis for $R$. For $i=0, 1 \ldots, n-1$, let $\beta_i \in R_+$ (resp. $\beta_i^{\vee} \in R^{\vee}_+$) be defined by 
\[     \sin \frac{\pi}{n}\beta_i= \sin\frac{(i+1)\pi}{n}\alpha_1+\sin\frac{i\pi}{n} \alpha_2, \]
\[ (\mbox{resp.} \quad \sin \frac{\pi}{n}\beta_i^{\vee}= \sin\frac{(i+1)\pi}{n}\alpha_1^{\vee}+\sin\frac{i\pi}{n} \alpha_2^{\vee}).\]
For $k=1,\ldots, \lfloor   n/2 \rfloor$, let $\left\{ \beta^*_{k-1}, \beta^*_{n-k} \right\}$ be the basis in $V_0^{\vee}$ dual to $\left\{ \beta^{\vee}_{k-1}, \beta^{\vee}_{n-k} \right\}$ (i.e. $\langle \beta^*_{k-1}, \beta^{\vee}_{k-1} \rangle=\langle \beta^*_{n-k}, \beta^{\vee}_{n-k} \rangle=1$ and $\langle \beta^*_{k-1}, \beta^{\vee}_{n-k} \rangle=\langle \beta^*_{n-k}, \beta^{\vee}_{k-1} \rangle=0$). For $k=1,2,\ldots, \lfloor   n/2 \rfloor$, define $\gamma_k=c_{\beta_{k-1}}\beta_{k-1}^*+c_{\beta_{n-k}}\beta_{n-k}^*$.
\begin{itemize}
 \item[(i)] $n$ odd: Further define $\gamma_{(n+1)/2}=\frac{1}{2}\beta_{(n-1)/2}^{\vee}$. Then a point $\gamma$ is solvable if and only if $\gamma$ is conjugate to some $\gamma_k$ $(k=1, \ldots, (n+1)/2)$. Those $\gamma_k$ can be written explicitly as
\[  \gamma_k =\frac{c}{4\sin \frac{2k-1}{2n}\pi \sin \frac{\pi}{2n}} (\alpha_1^{\vee}+\alpha_2^{\vee}) \]
where $c=c_{\alpha}$ for any $\alpha \in R$. Moreover, the point $\gamma_k$ is distinguished if and only if $k \neq (n+1)/2$. 
\item[(ii)] $n$ even:  Then a point $\gamma$ is solvable if and only if $\gamma$ is conjugate to some $\gamma_k$ $(k=1, \ldots, n/2)$. Moreover, the point $\gamma_k$ is distinguished if and only if  
\[ \left(c_{\beta_{k-1}}+c_{\beta_{n-k}}\cos \frac{(2k-1)\pi}{n}\right)\left(c_{\beta_{k-1}}\cos \frac{(2k-1)\pi}{n}+c_{\beta_{n-k}}\right)\neq 0. \]
\end{itemize}
\item[(2)] $H_3, H_4$ case: A point $\gamma$ is solvable if and only if $\gamma$ is distinguished. (See Table \ref{tb H3 quasi} and Table \ref{tb h4 quasi} in Section \ref{s surj} for the list of distinguished points.)
\end{itemize}
\item[(b)] Suppose $c_{\alpha} =0$ for all $\alpha \in R$. A point $\gamma$ is solvable if and only if $\gamma=0$.
\end{itemize} 
\end{proposition}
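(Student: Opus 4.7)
The strategy is to apply the definition of solvable point directly, taking as input the classification of distinguished points of Heckman--Opdam \cite[Section 4]{HO} and extending the analysis to each parabolic sub-root system $R_J$ for $J \subseteq \Delta$.

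Part (b) is immediate: if $c \equiv 0$, then any distinguished point $\gamma'\in V_J^\vee$ would satisfy $|R_J| = |R_J|+|J|$, forcing $|J|=0$. Hence only $J=\emptyset$ and $\gamma'=0$ are possible, and the auxiliary solvability condition reduces to the tautology $|R|=|R|$, so only $\gamma=0$ is solvable. For part (a), observe that when $J=\Delta$ the auxiliary condition
\[
|\{\alpha\in R:(\alpha,\gamma)=c_\alpha\}|=|\{\alpha\in R:(\alpha,\gamma)=0\}|+|J|
\]
coincides with distinguishedness, so the remaining work is to enumerate contributions from proper $J \subsetneq \Delta$ and match the candidates produced to the list of $\gamma_k$ in the proposition.

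In case (1), $R=I_2(n)$, the only proper nonzero parabolic subsystems are of type $A_1$, with the unique distinguished point $(c_\alpha/2)\alpha^\vee$ (assuming $c_\alpha\neq 0$). Inserting this into the auxiliary count becomes a trigonometric enumeration over the parametrization $\{\beta_i\}$: for $n$ odd, $W$ acts transitively on $R$ and the count produces exactly one new solvable orbit represented by $\gamma_{(n+1)/2}=\tfrac12\beta^\vee_{(n-1)/2}$; for $n$ even, a parallel analysis treating the two $W$-orbits of roots separately shows that every rank-one parabolic contributes an orbit already accounted for by some $\gamma_k$ (namely one of those for which the product distinguishedness criterion fails). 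The closed form for $\gamma_k$ in the odd case follows from solving the linear system $(\beta_{k-1},\gamma_k)=(\beta_{n-k},\gamma_k)=c$, and the product condition in the even case from directly counting those $\alpha\in R$ with $(\alpha,\gamma_k)\in\{0,c_\alpha\}$ using the explicit pairings among the $\beta_i$.

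In case (2), $R=H_3$ or $H_4$, the claim reduces to showing that no proper $J\subsetneq\Delta$ contributes a new solvable orbit. The proper parabolic sub-root systems are, for $H_3$, of types $A_1$, $A_1\times A_1$, $A_2$, $I_2(5)$, and for $H_4$ additionally of types $A_1\times A_2$, $A_3$, $A_1\times I_2(5)$, $H_3$. For each such $J$, the distinguished points of $V_J^\vee$ are known: the classical types come from the Bala--Carter classification, while $I_2(5)$ and $H_3$ are handled by the preceding cases. I would lift each such candidate back to $V^\vee$ and verify the auxiliary count against the full root system. The main obstacle is the $H_4$ enumeration, which involves many candidate lifts tested against the $120$ positive roots; this would be handled by fixing explicit coordinates and performing the finite checks by hand or with a computer algebra system, with the expected (and stated) outcome that the auxiliary equality never holds outside $J=\Delta$, so that every solvable point is already distinguished.
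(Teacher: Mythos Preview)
Your proposal is correct and follows essentially the same approach as the paper: both argue part~(b) directly from the definition, then for part~(a) take the Heckman--Opdam classification of distinguished points as input, enumerate the possible $J\subseteq\Delta$ up to conjugacy, and verify the solvability condition for each resulting nilpotent point by direct computation. The paper spells out only the $I_2(n)$, $n$ odd case (with $J=\emptyset,\{\alpha_1\},\Delta$) and declares the remaining cases ``straightforward computation,'' whereas you give a slightly more explicit outline for $I_2(n)$ even and for the $H_3$, $H_4$ parabolic types; but the underlying strategy is identical.
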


\begin{proof}
When $c_{\alpha} \equiv 0$, it is straightforward from the definition. We consider the case that $c$ is not identically zero. Since all distinguished points are computed in \cite[Section 4]{HO} by Heckman and Opdam, we could accordingly find out all the nilpotent points. Then straightforward computation could determine which nilpotent points are solvable. We only give the details in $I_2(n)$ with $n$ odd. Recall $\Delta=\left\{ \alpha_1, \alpha_2 \right\}$ is the set of simple roots. Up to conjugation under $W$, there are three possibility for a subset $J$ of $\Delta$: $\emptyset$, $\left\{ \alpha_1 \right\}$ and $\Delta$. When $J=\Delta$, the nilpotent points associated to $\Delta$ are $\gamma_1, \ldots \gamma_{(n-1)/2}$. Those points are distinguished and so solvable. When $J=\left\{ \alpha_1 \right\}$, the only nilpotent point associated to $J$ is $b\alpha_1$, where $b=c_{\alpha_1}/2$. Then it is easy to check $b\alpha_1$ is solvable in $V^{\vee}$ by definitions and is conjugate to $\gamma_{(n+1)/2}$. Finally, for $J=\emptyset$, the only nilpotent point associated to $\emptyset$ is $0$ which is not solvable in $V^{\vee}$. This completes the list of solvable points in $I_2(n)$, $n$ odd.

\end{proof}

\begin{remark}
When $R=I_2(n)$ with $n$ even, the solvable points $\gamma_k$ defined in Proposition \ref{prop class sol} may not be distinct in general even the parameter function $c$ is not identically zero. This also explains why the surjective map $\Phi$ defined later in the proof of Theorem \ref{thm surj} is not a bijection in some situation.
\end{remark}

\section{A correspondence between irreducible genuine $\widetilde{W}$-modules and solvable points} \label{s surj}

In this section, we prove our main result. Recall some notation from the introduction. Let $\mathcal V_{\mathrm{sol}}$ be the set of $W$-orbits of solvable points in $V^{\vee}$. 
Let $\Irr_{\gen}(\widetilde{W})$ be the set of genuine irreducible representations of $\widetilde{W}$. Define an equivalence relation on $\Irr_{\gen}(\widetilde{W})$: $\widetilde{\sigma}_1 \sim \widetilde{\sigma}_2$ if and only if $\widetilde{\sigma}_1=\widetilde{\sigma}_2$ or $\widetilde{\sigma}_1=\sgn \otimes \widetilde{\sigma}_2$, where $\sgn$ is the sign $W$-representation. Fix a symmetric bilinear form $\langle , \rangle$ on $V_0^{\vee}$ as in (\ref{notation inner}). Define an element in the Clifford algebra $C(V_0^{\vee})$:
\[ \Omega_{\widetilde{W}} =- \frac{1}{4} \sum_{\alpha>0, \beta>0,s_{\alpha}(\beta)<0} c_{\alpha}c_{\beta} | \alpha^{\vee}||\beta^{\vee}| f_{\alpha}f_{\beta} ,\]
where $|\alpha^{\vee}|=\langle \alpha^{\vee}, \alpha^{\vee} \rangle^{1/2}$.

\begin{lemma} \label{lem center}
The element $\Omega_{\widetilde{W}}$ is in the center of $C(V_0^{\vee})$.
\end{lemma}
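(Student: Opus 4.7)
The plan is to verify $[\gamma^\vee, \Omega_{\widetilde W}] = 0$ for every $\gamma \in R$. Since $\{\gamma^\vee : \gamma \in R\}$ spans $V_0^\vee$ and $V_0^\vee$ generates $C(V_0^\vee)$ as an algebra, this is equivalent to $\Omega_{\widetilde W}$ being central. First, using the identity $|\alpha^\vee| f_\alpha = \alpha^\vee$, rewrite
\[
\Omega_{\widetilde W} = -\frac{1}{4}\sum_{\substack{\alpha,\beta\in R_+\\ s_\alpha(\beta)<0}} c_\alpha c_\beta\,\alpha^\vee\beta^\vee.
\]
Two applications of the Clifford relation $uv+vu=-2\langle u,v\rangle$ give $[\gamma^\vee,\alpha^\vee\beta^\vee]=2\langle\gamma^\vee,\beta^\vee\rangle\alpha^\vee-2\langle\gamma^\vee,\alpha^\vee\rangle\beta^\vee$, so that $[\gamma^\vee,\Omega_{\widetilde W}]$ unfolds into an explicit $\mathbb{R}$-linear combination of coroots.

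The crux is to show that, for each fixed $\alpha\in R_+$, the inner sum
\[
\sum_{\beta\in R_+,\,s_\alpha(\beta)<0} c_\beta\bigl(\langle\gamma^\vee,\beta^\vee\rangle\alpha^\vee - \langle\gamma^\vee,\alpha^\vee\rangle\beta^\vee\bigr) = 0.
\]
I would prove this by pairing $\beta$ with $\beta' := -s_\alpha(\beta)$. This is an involution on the inversion set $\{\beta\in R_+ : s_\alpha(\beta)<0\}$ whose only fixed point is $\beta=\alpha$, whose contribution vanishes term-by-term. For a two-element orbit $\{\beta,\beta'\}$, the $W$-equivariance of the coroot map together with $(-\delta)^\vee=-\delta^\vee$ yields the key identity
\[
\beta^\vee + (\beta')^\vee = (\alpha,\beta^\vee)\,\alpha^\vee,
\]
and consequently $\langle\gamma^\vee,\beta^\vee\rangle+\langle\gamma^\vee,(\beta')^\vee\rangle = (\alpha,\beta^\vee)\langle\gamma^\vee,\alpha^\vee\rangle$. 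Combined with the $W$-invariance of $c$ (so that $c_\beta=c_{\beta'}$), these two relations cause the contribution of $\{\beta,\beta'\}$ to telescope to zero.

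The main obstacle is finding the right pairing, since the summation set $\{(\alpha,\beta):s_\alpha(\beta)<0\}$ is \emph{not} symmetric in $\alpha$ and $\beta$; a naive swap of summation variables does not produce cancellations. The involution $\beta\mapsto -s_\alpha(\beta)$ acting only on the second coordinate, together with the coroot identity above, is what makes the calculation work. Since the argument uses only the Clifford relations, the $W$-invariance of $c$ and $\langle,\rangle$, and the $W$-equivariance of the coroot map from Section~\ref{ss def root sys}, it applies uniformly to all reduced root systems; this is essentially the calculation in \cite[Theorem 2.3]{Ci} for the crystallographic case, but the key observation is that it is entirely formal and so extends without change to the noncrystallographic setting.
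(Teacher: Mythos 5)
Your proof is correct. You verify $[\gamma^\vee, \Omega_{\widetilde W}] = 0$ for all $\gamma \in R$ by a direct expansion of the Clifford commutator and then cancel the inner sum over $\beta$ via the involution $\beta \mapsto -s_\alpha(\beta)$ on the inversion set $\{\beta \in R_+ : s_\alpha(\beta) < 0\}$, using the identity $\beta^\vee + (-s_\alpha(\beta))^\vee = (\alpha,\beta^\vee)\alpha^\vee$ together with the $W$-invariance of $c$ and $\langle\,,\,\rangle$. All of this checks out: the involution is well-defined with unique fixed point $\beta=\alpha$ (whose term vanishes), and a two-element orbit contributes $c_\beta\bigl[\langle\gamma^\vee,\beta^\vee+(\beta')^\vee\rangle\alpha^\vee - \langle\gamma^\vee,\alpha^\vee\rangle(\beta^\vee+(\beta')^\vee)\bigr]$, which telescopes to $0$ once you substitute the coroot identity.

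Your route is genuinely different from the paper's. The paper never expands a Clifford commutator; instead it shows that conjugation by $f_\gamma$ for a \emph{simple} root $\gamma$ permutes the index set $\widetilde R = \{(\alpha,\beta)\in R_+\times R_+ : s_\alpha(\beta)<0\}$ via the identity $-f_\gamma(f_\alpha f_\beta)f_\gamma = f_{\alpha'}f_{\beta'}$, distinguishing the cases $\gamma=\alpha$, $\gamma=\beta$, $s_\alpha(\beta)=-\gamma$, and the generic one; centrality then follows because the $f_\gamma$ generate $\widetilde W$ and the coroots span $V_0^\vee$. That argument is a purely group-theoretic bookkeeping of the summand labels (it implicitly relies on $W$-invariance of the coefficients $c_\alpha c_\beta|\alpha^\vee||\beta^\vee|$ to match terms). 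Your proof trades this case analysis for a single pairing of indices, at the cost of first unwinding the commutator via the Clifford relation; it also has the minor advantage of checking commutation with \emph{every} $\gamma^\vee$ directly rather than reducing to simple $\gamma$ and then appealing to the fact that $\widetilde W$ plus $V_0^\vee$ being spanned by coroots upgrades $\widetilde W$-invariance to centrality. Both are elementary and uniform across root systems; yours is arguably a bit more self-contained since it does not need the reduction to simple reflections.
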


\begin{proof}
Let $\widetilde{R}= \left\{ (\alpha, \beta) \in R_+ \times R_+ : s_{\alpha}(\beta) <0\right\}$. To show $\Omega_{\widetilde{W}}$ is in the center, it suffices to check that for any simple root $\gamma$ and any pair $(\alpha, \beta) \in \widetilde{R}$, $-f_{\gamma}(f_{\alpha}f_{\beta})f_{\gamma}=f_{\alpha'}f_{\beta'}$ for some $(\alpha',\beta') \in \widetilde{R}$. To this end, we divide into four cases. If $s_{\alpha}(\beta) = -\gamma$, then consider $-f_{\gamma}(f_{\alpha}f_{\beta})f_{\gamma}=-f_{\gamma}f_{-s_{\alpha}(\beta)}f_{\alpha}f_{\gamma}=f_{\alpha}f_{\gamma}$. Then $(\alpha, \gamma) \in \widetilde{R}$ as desired. If $\gamma =\alpha$, then $-f_{\alpha}(f_{\alpha}f_{\beta})f_{\alpha}=f_{\alpha}f_{-s_{\alpha}(\beta)}$. Again $(\alpha, -s_{\alpha}(\beta)) \in \widetilde{R}$ as desired. For the case $\alpha =\beta$, we similarly have $-f_{\beta}(f_{\alpha}f_{\beta})f_{\beta}=f_{\alpha}f_{-s_{\alpha}(\beta)}$. The remining case is that $\gamma \neq \alpha$ and $\gamma \neq \beta$ and $s_{\alpha}(\beta) \neq -\gamma$. We have $-f_{\gamma}(f_{\alpha}f_{\beta})f_{\gamma}=(f_{\gamma}f_{\alpha}f_{\gamma})(f_{\gamma}f_{\beta}f_{\gamma})=f_{s_{\gamma}(\alpha)}f_{s_{\gamma}(\beta)}$. Note that $(s_{\gamma}(\alpha),s_{\gamma}(\beta))\in \widetilde{R}$ as desired since $s_{s_{\gamma}(\alpha)}(s_{\gamma}(\beta))=s_{\gamma}(s_{\alpha}(\beta))$. 

\end{proof}

Now Lemma \ref{lem center} and a version of Schur's lemma imply that $\Omega_{\widetilde{W}}$ acts on an irreducible $\widetilde{W}$-representation $(\widetilde{U}, \widetilde{\chi})$ by a scalar $\widetilde{\chi}(\Omega_{\widetilde{W}})$.



\begin{theorem} \label{thm surj}
Let $(V_0, R, V_0^{\vee}, R^{\vee})$ be a noncrystallographic root system. Fix a symmetric bilinear form $\langle , \rangle$ on $V_0^{\vee}$ as in (\ref{notation inner}). Then there exists a unique surjective map 
\[ \Phi: \Irr_{\gen}(\widetilde{W})/\sim \rightarrow \mathcal V_{\mathrm{sol}} \]
 such that for any $\widetilde{\chi} \in \Irr_{\gen}(\widetilde{W})$ and for any representative $\gamma \in \Phi([\widetilde{\chi}])$,
\begin{eqnarray} \label{eq surj map cond2}
 \widetilde{\chi}(\Omega_{\widetilde{W}})=\langle \gamma,\gamma \rangle .
\end{eqnarray}
Furthermore, $\Phi$ is bijective if and only if $c_{\alpha} \neq 0$ for some $\alpha \in R$, and either:
\begin{enumerate}
\item[(1)] $R=I_2(n)$ ($n$ odd) or $H_3$; or
\item[(2)] $R=I_2(n)$ ($n$ even) with  $\cos(k\pi/n) c'-\cos(l\pi/n) c'' \neq 0$
for any integers $k,l$ with distinct parity and $\cos(k\pi/n), \cos(l\pi/n) \neq \pm 1$, where $c'$ and $c''$ are the two values corresponding to the two distinct $W$-orbits in the parameter function $c$. 
\end{enumerate}\end{theorem}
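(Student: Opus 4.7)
My plan is to prove the theorem by a case-by-case analysis across the noncrystallographic root systems $R = I_2(n), H_3, H_4$, comparing two lists of scalars: the Casimir eigenvalues $\widetilde{\chi}(\Omega_{\widetilde{W}})$ on the genuine irreducibles tabulated in Section \ref{ss dc characters}, and the squared norms $\langle \gamma,\gamma\rangle$ on representatives of the solvable $W$-orbits classified in Proposition \ref{prop class sol}. Since $p(f_\alpha) = s_\alpha$ has $\sgn(s_\alpha) = -1$, the product $f_\alpha f_\beta$ satisfies $\sgn(f_\alpha f_\beta) = 1$, hence $(\sgn \otimes \widetilde{\chi})(\Omega_{\widetilde{W}}) = \widetilde{\chi}(\Omega_{\widetilde{W}})$, so any candidate $\Phi$ satisfying (\ref{eq surj map cond2}) is automatically well-defined on $\sim$-classes. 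Once every scalar on both sides is computed, (\ref{eq surj map cond2}) determines $\Phi$ uniquely provided that distinct solvable $W$-orbits have distinct norms, and verifying this non-coincidence of norms becomes part of the case check.

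For the Casimir side I would rewrite
\[ \Omega_{\widetilde{W}} = -\frac{1}{4} \sum_{\alpha > 0,\, \beta > 0,\, s_\alpha(\beta) < 0} c_\alpha c_\beta |\alpha^\vee| |\beta^\vee| f_\alpha f_\beta \]
and group the terms by the $\widetilde{W}$-conjugacy class of $f_\alpha f_\beta$, isolating the diagonal contribution $\frac{1}{4}\sum_{\alpha > 0} c_\alpha^2 |\alpha^\vee|^2$ from $\alpha = \beta$ (where $f_\alpha^2 = -1$) and treating the remaining off-diagonal products $f_\alpha f_\beta$, which lift rotations of $W$. In the dihedral case the rotations form a cyclic group generated by $s_{\alpha_1}s_{\alpha_2}$, and the character values are read from Tables \ref{tb ch odd}--\ref{tb ch even}, yielding closed trigonometric expressions for $\widetilde{\chi}(\Omega_{\widetilde{W}})$ in $c', c''$ and $\cos(k\pi/n)$. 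For $H_3$, the central decomposition $\widetilde{W(H_3)} = \widetilde{H}\cdot\langle\widetilde{w}_0\rangle$ with $\widetilde{w}_0$ central reduces each scalar to a computation on $\widetilde{\Alt}_5$ from Table \ref{tb chg h3}; for $H_4$, the semidirect structure $\widetilde{W(H_4)} \cong (\widetilde{\Alt}_5 \times \widetilde{\Alt}_5) \rtimes \langle \theta \rangle$ reduces the evaluation to the data in \cite[Table II(ii)]{Re}.

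On the geometric side Proposition \ref{prop class sol} gives the solvable $W$-orbits explicitly: for $I_2(n)$ odd the norm $\langle \gamma_k, \gamma_k\rangle$ is proportional to $(\sin\frac{(2k-1)\pi}{2n})^{-2}$, for $I_2(n)$ even the norm is a polynomial in $c', c'', \cos\frac{(2k-1)\pi}{n}$, and for $H_3, H_4$ one reads off a finite list of norms from the distinguished parameters computed in \cite[Section 4]{HO}. Matching the two lists defines $\Phi$; surjectivity amounts to checking that every solvable norm appears as some Casimir scalar, which is confirmed directly. For bijectivity I would count: in $I_2(n)$ odd there are $(n+1)/2$ $\sim$-classes matching the $(n+1)/2$ solvable orbits in Proposition \ref{prop class sol}(a)(i); in $I_2(n)$ even there are $n/2$ $\sim$-classes and at most $n/2$ solvable orbits, with equality equivalent to the cosine condition in (2) since this is precisely what prevents two different $\gamma_k, \gamma_l$ from collapsing; in $H_3$ the four $\sim$-classes match the four distinguished orbits of Table \ref{tb H3 quasi}; in $H_4$ the ten $\sim$-classes strictly outnumber the distinguished orbits of Table \ref{tb h4 quasi}, forcing non-injectivity. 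When $c \equiv 0$ both sides collapse to a single point and bijectivity fails for every noncrystallographic $R$.

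The main obstacle will be the $H_4$ case: computing all twenty Casimir scalars and determining the exact fiber structure of $\Phi$ requires careful bookkeeping in the semidirect product, because the outer automorphism $\theta$ mixes conjugacy classes between the two $\widetilde{\Alt}_5$ factors, so the sum over pairs $(\alpha,\beta)$ with $s_\alpha(\beta) < 0$ does not factorize along the two copies. A secondary subtlety is the dihedral case with $n$ even, where the injectivity part of (2) requires expanding $\langle \gamma_k, \gamma_k\rangle - \langle \gamma_l, \gamma_l\rangle$ as a product of linear forms in $c', c''$ with cosine coefficients and identifying each factor with one of the forbidden combinations $\cos(k\pi/n) c' - \cos(l\pi/n) c''$; only then does the equivalence between the cosine condition and bijectivity become transparent.
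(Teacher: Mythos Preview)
Your overall approach matches the paper's: a case-by-case comparison of the Casimir scalars $a(\widetilde{\chi})=\widetilde{\chi}(\Omega_{\widetilde{W}})$ (computed from the character tables via formula (\ref{eqn a form})) against the norms $\langle\gamma,\gamma\rangle$ of the solvable points from Proposition~\ref{prop class sol}. The observation that $\sgn$ is trivial on each $f_\alpha f_\beta$, the separation of the diagonal term, and the dihedral trigonometry are all exactly what the paper does.

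There is, however, a genuine error in your $H_4$ count. You assert that there are ten $\sim$-classes in $\Irr_{\gen}(\widetilde{W(H_4)})$, evidently assuming that tensoring with $\sgn$ pairs the twenty genuine irreducibles two by two. In fact every genuine irreducible of $\widetilde{W(H_4)}$ satisfies $\sgn\otimes\widetilde{\sigma}\cong\widetilde{\sigma}$ (this is recorded in the paper just before Proposition~\ref{prop equality ik}, and can be checked directly: each $\widetilde{\chi}_i$ vanishes on all odd classes). Hence $\sim$ is the trivial relation and $|\Irr_{\gen}(\widetilde{W(H_4)})/\!\sim| = 20$, not $10$. This matters because Table~\ref{tb h4 quasi} lists seventeen solvable orbits: with only ten classes in the source your map could not be surjective at all, whereas with twenty the actual picture is a surjection $20\to 17$ with fibres of sizes $3$, $2$, and fifteen singletons (see Table~\ref{tb H4 avalue}: $\widetilde{\chi}_{37},\widetilde{\chi}_{43},\widetilde{\chi}_{44}\mapsto\gamma_{17}$ and $\widetilde{\chi}_{51},\widetilde{\chi}_{52}\mapsto\gamma_{16}$). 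So the conclusion that $\Phi$ is not bijective for $H_4$ is correct, but your argument for it is not.

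A smaller point: for uniqueness of $\Phi$ you need that distinct solvable $W$-orbits have distinct norms, and for existence you need that every Casimir scalar actually equals some solvable norm. You mention both ingredients but phrase surjectivity as ``every solvable norm appears as some Casimir scalar''; be sure you also verify the converse containment, since otherwise $\Phi$ is not even defined on all of $\Irr_{\gen}(\widetilde{W})/\!\sim$.
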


The proof of Theorem \ref{thm surj} will be done by case-by-case analysis. For $\widetilde{\chi}\in \Irr_{\gen}(\widetilde{W})$, set $a(\widetilde{\chi})=\widetilde{\chi}(\Omega_{\widetilde{W}}).$ 
To begin with, we write down a slightly more explicit formula for $a(\widetilde{\chi})$:
\begin{eqnarray} \label{eqn a form} a(\widetilde{\chi})=\frac{1}{4} \sum_{\alpha>0} c_{\alpha}^2 |\alpha^{\vee}|^2-\frac{1}{4}\sum_{(\alpha,\beta)\in \widetilde{R}}c_{\alpha}c_{\beta}|\alpha^{\vee}||\beta^{\vee}|\frac{\tr_{\widetilde{\chi}}(f_{\alpha}f_{\beta})}{\dim_{\mathbb{C}} \widetilde{\chi} } ,
\end{eqnarray}
where $\tr_{\widetilde{\chi}}(f_{\alpha}f_{\beta})$ is the value of the character of $\widetilde{\chi}$ on $f_{\alpha}f_{\beta}$ and 
\[ \widetilde{R}= \left\{ (\alpha, \beta) \in R_+ \times R_+ : s_{\alpha}(\beta) <0,\ \alpha \neq \beta \right\}. \]

\bigskip

\noindent
{\bf Type $I_2(n)$, $n$ odd} 

Since there is only one $W$-orbit on $R$, we set $c=c_{\alpha}$ for simplicity. Recall from Proposition \ref{prop class sol} that the solvable points are, for $k =1, \ldots, (n+1)/2$,
\[\gamma_k=\frac{c}{4\sin \frac{2k-1}{2n}\pi \sin \frac{\pi}{2n}} (\alpha_1^{\vee}+\alpha_2^{\vee}). \]
The squares of their lengths are 
 \[ \langle \gamma_k, \gamma_k \rangle = \frac{c^2}{2\sin^2\left(\frac{2k-1}{2n}\pi \right)}  \]
 for $k=1, \ldots, (n+1)/2$.

We next compute the value $a(\widetilde{\chi})$ for $\widetilde{\chi} \in \Irr_{\mathrm{gen}}(\widetilde{W})$.
The following two formulas, whose proofs are elementary (but possibly lengthy), may be useful in computing $a(\widetilde{\chi})$:
\begin{eqnarray} \label{eqn tri1}
\sum_{k=1}^{A} k\cos(rk) &=& -\frac{\sin^2 (r(A+1)/2)}{2\sin^2 (r/2) }+(A+1)\frac{\sin r(1/2+A)}{2\sin (r/2)},
\end{eqnarray}
\begin{eqnarray} \label{eqn tri2}
\sum_{k=1}^{A} \cos (rk) &=& -\frac{1}{2}+\frac{\sin r(1/2+A)}{2\sin (r/2)},
\end{eqnarray}
where $A$ is a positive integer and $r \in \mathbb{R} \setminus \left\{ 2\pi n :n \in \mathbb{Z} \right\}$.

By a simple computation, there are $2n-4k$ pairs of roots $(\alpha, \beta) \in \widetilde{R}$ forming an angle $k\pi/n$ for $k=1,\ldots, (n-1)/2$. Here a pair of roots $\alpha$, $\beta$ forming an angle $\theta$ means that $\cos \theta=\frac{\langle \alpha^{\vee}, \beta^{\vee} \rangle}{|\alpha^{\vee}||\beta^{\vee}|}$.


Then, by (\ref{eqn a form}) and Table \ref{tb ch odd},
\begin{eqnarray*}
 a(\widetilde{\rho}_i) &=&\frac{1}{2} c^2 n-\frac{1}{2}c^2 \sum_{k=1}^{(n-1)/2} (2n-4k)(-1)^{k+1} \frac{2\cos \left(2ik\pi/n \right)}{2} \\ 
 &=& \frac{1}{2} c^2 n+\frac{1}{2}c^2 \sum_{k=1}^{(n-1)/2} (2n-4k) \cos \left((n-2i)k\pi/n \right) \\
 &=& \frac{c^2}{2\sin^2 \left( \frac{n-2i}{2n}\pi \right)} .
\end{eqnarray*}
The last equality can be deduced from (\ref{eqn tri1}) and (\ref{eqn tri2}) with some further simplification. Similarly,
\[ a(\widetilde{\chi}_1)=a(\widetilde{\chi}_2) =\frac{1}{2} c^2 n -\frac{1}{2}c^2 \sum_{k=1}^{(n-1)/2}(2n-4k)(-1)^{k+1} =\frac{1}{2}c^2 . \]

Note that $[\widetilde{\chi}_1]$ contains $\widetilde{\chi}_1$ and $\widetilde{\chi}_2$ and each $[\widetilde{\rho}_i]$ contains only the character $\widetilde{\rho}_i$. Now, by above computations, the bijection $\Phi$ defined by
\[  [\widetilde{\rho}_i] \stackrel{\Phi}{\mapsto} W\gamma_{(n+1)/2-i} \quad (i=1,\ldots, (n-1)/2) ; \quad   [\widetilde{\chi}_1] \stackrel{\Phi}{\mapsto} W\gamma_{(n+1)/2} ,\]
satisfies the desired property. Here $W \gamma$ means the $W$-orbit of $\gamma$ in $V_0^{\vee}$.

\bigskip

\noindent
{\bf Type $I_2(n)$, $n$ even}

In the case of $n$ even, $\alpha_1$ and $\alpha_2$ are in distinct $W$-orbits and so the unequal parameters case may happen. For notational convenience, set $c_1=c_{\alpha_1}$ and $c_2=c_{\alpha_2}$. For $i=0, 1 \ldots, n-1$, let $\beta_i^{\vee} \in R^{\vee}_+$ defined as in Proposition \ref{prop class sol}. 
Then according to Proposition \ref{prop class sol}, for $k=1, \ldots,  \frac{n}{2} $, the solvable points $\gamma_k$ are determined by 
\begin{equation} \label{def I2even}
  \langle \beta_{n-k}^{\vee},\gamma_k \rangle=c_{\beta_{n-k}}, \quad \langle \beta_{k-1}^{\vee}, \gamma_k \rangle=c_{\beta_{k-1}}.
 \end{equation}
Note that as $n$ is even, $n-k$ and $k-1$ have different parity, and so $\left\{ c_{\beta_{n-k}}, c_{\beta_{k-1}} \right\}=\left\{ c_1, c_2 \right\}$.

Now elementary computation gives
\[ \langle \gamma_k, \gamma_k \rangle = \frac{1}{2\sin^2\left( \frac{n-2k+1}{n}\pi \right)}\left(c_{1}^2-2c_{1}c_{2}\cos\frac{n-2k+1}{n}\pi+c_{2}^2\right). \]

Next step is to compute $a(\widetilde{\chi})$. For $\alpha \in R$, denote the $W$-orbit of $\alpha$ by $W\alpha$. Again we record the number of pairs of roots $(\alpha, \beta) \in \widetilde{R}$ forming certain angles $\theta$: 

\begin{table}[h!]
\caption{The set $\widetilde{R}$ of $I_2(n)$ ($n$ even)} \label{tb I2 even angle}
\begin{tabular}{|l | l|}
\hline
$\theta$ &  number of pairs $(\alpha, \beta)$ in $\widetilde{R}$\\
\hline
$2k\pi/n$                                    &  (1) $n-4k$ for $\alpha, \beta \in W\alpha_1$ \\ 
$k=1,\ldots, \left\lfloor n/4 \right\rfloor$ &   (2) $n-4k$ for $\alpha, \beta \in W\alpha_2$ \\
                                            &   (3) $0$ for $\alpha, \beta$ in distinct $W$-orbits.         \\                              
\hline                                         
$(2k-1)\pi/n$                                   & (1) $2(n-4k+2)$ for $\alpha, \beta$ in distinct $W$-orbits        \\
$k=1, \ldots,  \left\lfloor n/4 \right\rfloor$  
                                                &  (2) $0$ for $\alpha, \beta$ in the same $W$-orbit     \\
\hline
\end{tabular}
\end{table}

Regard $a(\widetilde{\rho_i})$ as a homogeneous polynomial of degree $2$ with indeterminants $c_{1}$ and $c_{2}$. Then, the coefficient of $c_{1}^2$ is equal to
\[ \frac{1}{2}\left(\frac{n}{2}\right)-\frac{1}{2}\sum_{k=1}^{\lfloor n/4 \rfloor} (n-4k)(-1)\frac{ 2\cos \frac{2i-1}{n}(2k)\pi }{2}=\frac{1}{2\sin^2\left(\frac{2i-1}{n}\pi \right)}. \]
The last equality can be again deduced from the formulas (\ref{eqn tri1}) and (\ref{eqn tri2}). The coefficient of $c_{2}^2$ has the same formula as that of $c_{1}^2$. It remains to compute the coefficient of $c_{1}c_2$:
\begin{eqnarray*}
  & & \frac{1}{2}\sum_{k=1, k \mathrm{odd}}^{\lfloor n/2 \rfloor} 2(n-2k)(-1) \frac{2 \cos \frac{2i-1}{n}k\pi}{2} \\
  &=& \sum_{k=1}^{\lfloor n/2 \rfloor} (n-2k)(-1)\frac{2 \cos \frac{2i-1}{n}k\pi}{2}-\sum_{l=1}^{\lfloor n/4 \rfloor} (n-2(2l))(-1)\frac{2\cos\frac{2i-1}{n}(2l)\pi}{2} \\
  &=& \left( \frac{n}{2}-\frac{1}{2\sin^2\left(\frac{2i-1}{2n}\pi\right)}\right)-2\left( \frac{n}{4}-\frac{1}{2\sin^2\left(\frac{2i-1}{n}\pi\right)} \right) \\
  &=& -\frac{\cos\frac{2i-1}{n}\pi}{\sin^2\left( \frac{2i-1}{n}\pi \right)}. \\
\end{eqnarray*}
Hence, we obtain a surjection $\Phi: \Irr_{\mathrm{gen}}(\widetilde{W}) \rightarrow \mathcal V_{\mathrm{sol}}$ defined by
\[   [\rho_i] \stackrel{\Phi}{\mapsto} W\gamma_{n/2-i+1}  \quad (i=1, \ldots, n/2)  .  \]

For the bijectivity of $\Phi$, further calculation shows that $\gamma_k$ in (\ref{def I2even}) are in distinct $W$-orbits if and only if the conditions stated in (2) of the theorem hold. We skip the details of the computation. 

Indeed, if we have $\cos(p\pi/n)c_1-\cos(q\pi/n)c_2=0$ for some $p,q$ with distinct parities and $\cos(p\pi/n),\cos(q\pi/n)\neq \pm 1$, then two distinguished points $\gamma_k$ will be in the same $W$-orbit. This causes the bijectivity fails. Interestingly , if we keep $\cos(p\pi/n)c_1-\cos(q\pi/n)c_2=0$ for some $p,q$ with distinct parities, but we now have $\cos(p\pi/n)=\pm 1$ or $\cos(q\pi/n) \pm 1$, then the bijectivity still holds. However, one of the solvable points $\gamma_k$ will become non-distinguished (Proposition \ref{prop class sol}). 


\bigskip 
\noindent
{\bf  Type $H_3$}

Set $c=c_{\alpha}$ for all $\alpha$. Let $\left\{ \omega_1, \omega_2, \omega_3 \right\}$ be a basis for $V_0^{\vee}$ dual to $\left\{\alpha_1^{\vee}, \alpha_2^{\vee}, \alpha_3^{\vee} \right\}$ so that $\langle \omega_i, \alpha_j^{\vee}\rangle= \delta_{ij}$. By Proposition \ref{prop class sol}, all solvable points are distinguished points. The distinguished points  (from \cite[Table 4.13]{HO}) and their lengths are listed in Table \ref{tb H3 quasi}. We provide some data in Table \ref{tb H3 angle} and Table \ref{tb H3 avalue} so that (\ref{eqn a form}) can be used to compute $a(\widetilde{\chi})$. The explicit form of the bijection $\Phi$ can be read from Table \ref{tb H3 avalue}. Recall $\tau=(1+\sqrt{5})/2$ and $\bar{\tau}=(1-\sqrt{5})/2$.

\begin{table}[!ht]
\caption{ Distinguished/Solvable points in $H_3$}   \label{tb H3 quasi}
\begin{tabular}{l l l}
\hline
Labels & distinguished/solvable points $\gamma$ & $\langle \gamma, \gamma \rangle$ \\
 \hline
$\gamma_1$ &  $c\omega_1+c\omega_2+c\omega_3$ & $(\frac{43}{2}\tau+\frac{19}{2}\bar{\tau})c^2$ \\
$\gamma_2$ &  $(1+\tau)^{-1}(c\omega_1+c\omega_2+c\tau \omega_3)$ & $\frac{11}{2}c^2$ \\
$\gamma_3$ &  $(1+\tau)^{-1}(c\omega_1+c\omega_2+c(1+\tau)\omega_3)$ & $8c^2$    \\
$\gamma_4$ & $(2+3\tau)^{-1}(c(1+\tau)\omega_1+c\tau\omega_2+c\omega_3)$ & $(\frac{19}{2}\tau+\frac{43}{2}\bar{\tau})c^2$ \\
\hline
\end{tabular}
\end{table}

\begin{table}[!ht]
\caption{The set $\widetilde{R}$ of $H_3$} \label{tb H3 angle}
\begin{tabular}{|l | c|}
\hline
$\theta$ &  number of pairs $(\alpha, \beta)$ in $\widetilde{R}$ with $\cos\theta=\frac{\langle \alpha, \beta \rangle}{|\alpha||\beta|}$\\
\hline
$\pi/5$                                    &  $36$ \\                             
\hline                                         
$2\pi/5$                                   & $12$   \\
\hline
$\pi/3$                                    &  $20$  \\              
\hline  
\end{tabular}
\end{table}

\begin{table}[!ht] 
\caption{$a(\widetilde{\chi})$ of $H_3$}
\label{tb H3 avalue}
\begin{tabular}{|c | c| c| c| c| c|} 
\hline
Characters   $\widetilde{\chi}$                     & $\dim_{\mathbb{C}} \widetilde{\chi}$              &   $\tr(-f_{\alpha_1}f_{\alpha_2})$ &  $\tr(-(f_{\alpha_1}f_{\alpha_2})^2)$  & $\tr(-f_{\alpha_2}f_{\alpha_3})$ & $a(\widetilde{\chi} )$\\
\hline
$\widetilde{\chi}_2^+, \widetilde{\chi}_2^-$   &   $2$               &   $-\bar{\tau}$             &   $\tau$              &                    $-1$    &  $\langle \gamma_4, \gamma_4 \rangle$ \\
\hline
$\widetilde{\overline{\chi}}_2^+, \widetilde{\overline{\chi}}_2^-$   &   $2$      &   $-\tau$             &   $\bar{\tau}$              &                   $-1$     &  $\langle \gamma_1, \gamma_1 \rangle$      \\
\hline
$\widetilde{\chi}_6^+$, $\widetilde{\chi}_6^-$  &        $6$               &    $1$     &   $-1$      &  $0$      & $\langle \gamma_2, \gamma_2 \rangle$  \\
     \hline
$\widetilde{\chi}_4^+$, $\widetilde{\chi}_4^-$ &        $4$                 &    $-1$    &    $1$   &   $1$                  &$\langle \gamma_3, \gamma_3 \rangle$ \\
\hline
\end{tabular}
\end{table}

\bigskip
\noindent
{\bf Type $H_4$}

Set $c=c_{\alpha}$ for all $\alpha$. Let $\left\{ \omega_1, \omega_2,\omega_3, \omega_4 \right\}$ be a basis for $V_0^{\vee}$ dual to $\left\{ \alpha_1^{\vee}, \alpha_2^{\vee}, \alpha_3^{\vee}, \alpha_4^{\vee} \right\}$ so that $\langle \omega_i, \alpha_j^{\vee} \rangle=\delta_{ij}$. The distinguished points are shown in Table \ref{tb h4 quasi}. Relevant data in computing $a(\widetilde{\chi})$ are given in Table \ref{tb H4 angle} and Table \ref{tb H4 avalue}.  The characters in Table \ref{tb H4 avalue} are listed in the same order as the ones in \cite[Table II(ii)]{Re}. The explicit surjective map of $\Phi$ can be seen in Table \ref{tb H4 avalue}. 
Recall $\tau=(1+\sqrt{5})/2$ and $\bar{\tau}=(1-\sqrt{5})/2$.

\begin{table}[!ht] 
\caption{ Distinguished/Solvable points in $H_4$} 
\label{tb h4 quasi} 
\begin{threeparttable}
\begin{tabular}{l l  c}
\hline
Labels & distinguished/solvable points $\gamma$ & $\langle \gamma, \gamma\rangle$ \\
 \hline
$\gamma_1$ &  $c\omega_1+c\omega_2+c\omega_3+c\omega_4$ & $(238\tau+94\bar{\tau})c^2$ \\
$\gamma_2$ & $(1+\tau)^{-1}(c\omega_1+c\omega_2+c\tau \omega_3+c\omega_4)$ & $(48\tau+24\bar{\tau})c^2$ \\
$\gamma_3$ & $(1+\tau)^{-1}(c\omega_1+c\omega_2+c\tau\omega_3+c(1+\tau)\omega_4)$ & $(64\tau+28\bar{\tau})c^2$    \\
$\gamma_4$ & $(1+\tau)^{-1}(c\omega_1+c\omega_2+c(1+\tau)\omega_3+c(1+\tau)\omega_4)$ & $(90\tau+42\bar{\tau})c^2$ \\
$\gamma_5$ & $(2+3\tau)^{-1} (c(1+\tau)\omega_1+c\tau\omega_2+c\omega_3+c(1+2\tau)\omega_4)$ & $30c^2$ \\
$\gamma_6$ & $(2+3\tau)^{-1}(c(1+\tau)\omega_1+c\tau\omega_2+c\omega_3+c(1+3\tau)\omega_4)$  & $36c^2$ \\
$\gamma_7$ & $(2+3\tau)^{-1} (c(1+\tau)\omega_1+c\tau\omega_2+c\omega_3+c(2+3\tau)\omega_4)$  & $40c^2$ \\
$\gamma_8$ & $(3+5\tau)^{-1} (c(1+2\tau)\omega_1+c\omega_2+c\tau\omega_3+c\tau\omega_4)$  & $(42\tau+90\bar{\tau})c^2$ \\ 
$\gamma_9$ & $(2+4\tau)^{-1} (c\omega_1+c\tau\omega_2+c\tau\omega_3+c\omega_4)$ & $17/2c^2$ \\
$\gamma_{10}$ & $(2+3\tau)^{-1} (c\omega_1+c\omega_2+c\tau\omega_3+c\omega_4)$ & $(24\tau+48\bar{\tau})c^2$ \\
$\gamma_{11}$ &$(3+5\tau)^{-1} (c\tau\omega_1+c\tau\omega_2+c\omega_3+c\tau\omega_4)$ & $(28\tau+64\bar{\tau})c^2$ \\
$\gamma_{12}$ & $(5+8\tau)^{-1} (c\omega_1+c(1+2\tau)\omega_2+c\omega_3+c\tau\omega_4)$ & $(94\tau+238\bar{\tau})c^2$ \\
$\gamma_{13}$ &$(1+2\tau)^{-1} (c\omega_2+c\tau\omega_3+c\tau\omega_4)$  &  $16c^2$ \\
$\gamma_{14}$ &$(2+3\tau)^{-1} (c\tau\omega_2+c\tau\omega_3+c\omega_4)$  &  $(18\tau+34\bar{\tau})c^2$ \\
$\gamma_{15}$ &$(1+\tau)^{-1} (c\omega_1+c\omega_2+c(1+\tau)\omega_4)$ & $(34\tau+18\bar{\tau})c^2$ \\
$\gamma_{16}$ &$(1+2\tau)^{-1} (c\omega_2+c\tau\omega_3)$  &  $10c^2$ \\
$\gamma_{17}$ &$(1+\tau)^{-1} c\omega_2$  & $6c^2$ \\
\hline
\end{tabular}
\end{threeparttable}
\end{table}

\begin{table}[!ht]
\caption{The set $\widetilde{R}$ of $H_4$} \label{tb H4 angle}
\begin{tabular}{|l | c|}
\hline
$\theta$ &  number of pairs $(\alpha, \beta)$ in $\widetilde{R}$ with $\cos\theta=\frac{\langle \alpha, \beta \rangle}{|\alpha||\beta|}$\\
\hline
$\pi/5$                                    &  $432$ \\                             
\hline                                         
$2\pi/5$                                   &   $144$   \\
\hline
$\pi/3$                                    &  $400$  \\              
\hline  
\end{tabular}
\end{table}

\begin{table}[!ht]
\caption{ $a(\widetilde{\chi})$ of $H_4$}  \label{tb H4 avalue}
\begin{tabular}{ |c | c| c| c| c| c|} 
\hline
$\mbox{Characters} \widetilde{\chi}$     & $\dim_{\mathbb{C}}\widetilde{\chi}$                &   $\tr(-f_{\alpha_1}f_{\alpha_2})$ &  $\tr(-(f_{\alpha_1}f_{\alpha_2})^2)$  & $\tr(-f_{\alpha_2}f_{\alpha_3})$ & $a(\widetilde{\chi} )$\\
\hline
  $\widetilde{\chi}_{35}$  &   $4$ &    $-2\bar{\tau}$      &   $2\tau$    &    $-2$   &  $\langle \gamma_{12},\gamma_{12} \rangle$ \\
\hline
 $\widetilde{\chi}_{36}$   &   $4$  &    $-2\tau$       &   $2\bar{\tau}$   &    $-2$  &  $\langle \gamma_{1},\gamma_{1} \rangle$   \\
\hline
 $\widetilde{\chi}_{37}$   &   $12$ &    $2$      &     $-2$       &      $0$    &     $\langle \gamma_{17},\gamma_{17} \rangle$ \\
 \hline
  $\widetilde{\chi}_{38}$   &   $8$ &   $-2$     &  $2$       &     $2$       &   $\langle \gamma_{13},\gamma_{13} \rangle$ \\
 \hline
  $\widetilde{\chi}_{39}$   &   $16$ &   $2\bar{\tau}$    &   $-2\tau$    &     $-2$   &  $\langle \gamma_{3},\gamma_{3} \rangle$ \\
 \hline
  $\widetilde{\chi}_{40}$   &   $16$ &   $2\tau$  &  $-2\bar{\tau}$  &    $-2$   &  $\langle \gamma_{11},\gamma_{11} \rangle$ \\
 \hline
  $\widetilde{\chi}_{41}$   &   $48$ &   $-2$   &    $2$         &   $0$       &  $\langle \gamma_{6},\gamma_{6} \rangle$ \\
 \hline
  $\widetilde{\chi}_{42}$   &   $32$ &   $2$   &     $-2$          &   $2$  &   $\langle \gamma_{9},\gamma_{9} \rangle$ \\
 \hline
  $\widetilde{\chi}_{43}$   &   $12$ &   $2$   &    $-2$          &   $0$    &  $\langle \gamma_{17},\gamma_{17} \rangle$ \\
 \hline
  $\widetilde{\chi}_{44}$   &   $12$ &   $2$   &   $-2$            &  $0$     &  $\langle \gamma_{17},\gamma_{17} \rangle$      \\
 \hline
  $\widetilde{\chi}_{45}$   &   $12$ &   $-2\tau^2$    &   $2\bar{\tau}^2$   & $0$   &  $\langle \gamma_{4},\gamma_{4} \rangle$   \\
 \hline
  $\widetilde{\chi}_{46}$   &   $12$ &   $-2\bar{\tau}^2$ &    $2\tau^2$  & $0$      &  $\langle \gamma_{8},\gamma_{8} \rangle$  \\
 \hline
  $\widetilde{\chi}_{47}$   &   $36$ &    $2\bar{\tau}$   &     $-2\tau$   & $0$     &   $\langle \gamma_{15},\gamma_{15} \rangle$  \\
 \hline
  $\widetilde{\chi}_{48}$   &   $36$ &    $2\tau$    &   $-2\bar{\tau}$     &   $0$  &  $\langle \gamma_{14},\gamma_{14} \rangle$   \\
 \hline
  $\widetilde{\chi}_{49}$   &   $24$ &    $-2\tau$  &     $2\bar{\tau}$    &     $0$  &  $\langle \gamma_{2},\gamma_{2} \rangle$   \\
 \hline
  $\widetilde{\chi}_{50}$   &   $24$ &    $-2\bar{\tau}$    &     $2\tau$   &    $0$  &  $\langle \gamma_{10},\gamma_{10} \rangle$\\
 \hline
  $\widetilde{\chi}_{51}$   &   $20$ &    $0$      &  $0$             &       $2$  & $\langle \gamma_{16},\gamma_{16} \rangle$     \\
 \hline
  $\widetilde{\chi}_{52}$   &   $20$ &    $0$     &  $0$       &       $2$    &    $\langle \gamma_{16},\gamma_{16} \rangle$  \\
 \hline
  $\widetilde{\chi}_{53}$   &   $60$ &     $0$     &  $0$      &       $0$      &  $\langle \gamma_{5},\gamma_{5} \rangle$  \\
 \hline
  $\widetilde{\chi}_{54}$   &   $40$ &     $0$      &  $0$       &       $-2$      & $\langle \gamma_{7},\gamma_{7} \rangle$   \\
 \hline
 \end{tabular}
\end{table}

\begin{remark}
In the crystallographic cases, Ciubotaru \cite{Ci} constructed a surjective map $\Theta$ from $\Irr_{\gen}(\widetilde{W})/\sim$ to $\mathcal N_{\sol}$. Composing the map $\Theta$ and the bijection $\Pi: \mathcal N_{\sol} \rightarrow \mathcal V_{\sol}$ defined in Section \ref{ss def sol}, we would again obtain a surjective map as the one in Theorem \ref{thm surj}.
\end{remark}







\section{Elliptic representations of $W$} \label{s ellip}

\subsection{Elliptic representation theory of finite groups} \label{ss ellip finite}

This section is based on \cite[Section 3.1]{Ree}. Let $\Gamma$ be a finite group and let $V$ be a $\Gamma$-representation over $\mathbb{C}$. Then an element $\gamma \in \Gamma$ is said to be {\it elliptic} if $\gamma$ does not have any fixed point in $V$, or equivalently, $\det_V(\gamma-1)\neq 0$, where $\det_V$ is the determinant function for the linear operators on $V$. A conjugacy class $C$ in $\Gamma$ is called {\it elliptic} if $C$ contains an elliptic element. 

Let $R(\Gamma)$ be the representation ring of $\Gamma$ over $\mathbb C$. Let $\mathcal L$ be a collection of subgroups $L$ of $\Gamma$ such that $V^{L} \neq 0$. Here $V^L=\left\{ v \in V: \gamma \cdot v=v \mbox{ for all } \gamma \in L \right\}$ is the fixed point set of $L$ in $V$. For every $L \in \mathcal L$, let $\Ind_L^{\Gamma}: R(L) \rightarrow R(\Gamma)$ be the induction map and define
\begin{align} \label{eqn Rind}
R_{\ind}(\Gamma)=\sum_{L \in \mathcal L} \Ind_{L}^{\Gamma} (R(L)) \subset R(\Gamma)  , \\
\label{eqn Rellip} \overline{R}(\Gamma)=R(\Gamma)/R_{\ind}(\Gamma) .
\end{align}
The quotient $\overline{R}(\Gamma)$ is called the {\it elliptic representation ring} of $\Gamma$.

Define a bilinear form $\langle . ,. \rangle_{\Gamma}$ on $R(\Gamma)$:
     \[ \langle \sigma_1,\sigma_2 \rangle_{\Gamma} =\frac{1}{|\Gamma|} \sum_{\gamma \in \Gamma} \tr_{\sigma_1}(\gamma) \overline{\tr_{\sigma_2}(\gamma)}= \dim_{\mathbb{C}} \Hom_{\Gamma}(\sigma_1, \sigma_2) , \]
where $\tr_{\sigma_i}(\gamma)$ is the value of the character of $\sigma_i$ on $\gamma$.

Define an elliptic pairing $e_{\Gamma}$ on $R(\Gamma)$:
\begin{equation*}
 e_{\Gamma}(\sigma_1, \sigma_2) = \sum_{i=0}^{\dim_{\mathbb{C}} V} (-1)^i\dim_{\mathbb{C}} \Hom_{\Gamma}(\sigma_1 \otimes \wedge^iV, \sigma_2)  =  \langle \sigma_1 \otimes \wedge^{\pm}V, \sigma_2 \rangle_{\Gamma}, 
 \end{equation*}
where $\wedge^{\pm} V=\sum_{i} (-1)^i \wedge^i V$.

 It is shown in \cite{Ree} that the radical $\mathrm{rad}(e_{\Gamma})$ of $e_{\Gamma}$ is precisely $R_{\ind}(\Gamma)$. Thus the elliptic pairing $e_{\Gamma}$ induces a non-degenerate bilinear form on $\overline{R}(\Gamma)$, which is still denoted $e_{\Gamma}$, and furthermore 
\begin{equation} \label{eq dim ellip}
\dim_{\mathbb{C}} \overline{R}(\Gamma)=|\mathcal C_{\ellip}(\Gamma)|, 
\end{equation}
where $\mathcal C_{\ellip}(\Gamma)$ is the set of elliptic conjugacy classes in $\Gamma$.

\subsection{Elliptic representations of real reflection groups}
We now specialize to $\Gamma=W$ a finite real reflection group and $V$ the reflection representation associated to $W$. We give two general properties about the elliptic representations of $W$.

A subgroup $L$ of $W$ is a proper parabolic subgroup if $L=\langle s_{\alpha_i} : \alpha_i \in J \rangle$ for some $J \subsetneq \Delta$. Let $\mathcal W$ be the set of proper parabolic subgroups of $W$.

\begin{proposition} \label{prop fix gp}
For any $L \in \mathcal L$ (notation in Section \ref{ss ellip finite}), there exists a proper parabolic subgroup $P$ of $W$ and $w \in W$ such that $L \subseteq wPw^{-1}$. In particular,
 \begin{equation} \label{eqn fix gp}
 R_{\ind}(W) = \sum_{P \in \mathcal W} \Ind_P^W(R(P)) . 
\end{equation}
\end{proposition}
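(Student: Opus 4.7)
The plan is to reduce the statement to the classical Steinberg fixed-point theorem for finite real reflection groups: if $W$ acts on its reflection representation $V$, then for any $v \in V$ the pointwise stabilizer $W_v = \{w \in W : w\cdot v = v\}$ is generated by the reflections it contains, and consequently is a (not necessarily standard) parabolic subgroup of $W$, i.e.\ conjugate to $W_J = \langle s_\alpha : \alpha \in J\rangle$ for some $J \subseteq \Delta$. Since Steinberg's theorem is valid in the generality of finite real reflection groups, it applies uniformly to $I_2(n)$, $H_3$ and $H_4$.

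First, I would take $L \in \mathcal L$ and choose a nonzero $v \in V^L$. By construction $L \subseteq W_v$. Next, I would invoke Steinberg's theorem to identify $W_v$ with a parabolic subgroup $wW_Jw^{-1}$ for some $w \in W$ and $J \subseteq \Delta$. The crucial point is that $J$ must be a proper subset of $\Delta$: since $W$ acts on the reflection representation $V$ without nonzero global fixed vectors (equivalently, $V^W = 0$), the hypothesis $v \ne 0$ forces $W_v \ne W$, hence $J \subsetneq \Delta$. This gives the first assertion: $L \subseteq wPw^{-1}$ with $P = W_J$ a proper parabolic subgroup.

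Second, to derive equation (\ref{eqn fix gp}), I would argue the two inclusions separately. For $\supseteq$, note that every proper parabolic subgroup $P = W_J$ with $J \subsetneq \Delta$ is itself in $\mathcal L$: any nonzero vector in $V$ orthogonal to the coroots indexed by $J$ is fixed by $P$, and such a vector exists because $J$ is proper. Hence $\Ind_P^W(R(P)) \subseteq R_{\ind}(W)$. For $\subseteq$, take $L \in \mathcal L$, apply the first part to obtain $L \subseteq wPw^{-1}$ with $P \in \mathcal W$, and use the transitivity of induction, $\Ind_L^W = \Ind_{wPw^{-1}}^W \circ \Ind_L^{wPw^{-1}}$, together with the isomorphism $R(wPw^{-1}) \cong R(P)$ and $\Ind_{wPw^{-1}}^W \cong \Ind_P^W$ under conjugation by $w$; this shows $\Ind_L^W(R(L)) \subseteq \Ind_P^W(R(P))$.

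The step that requires the most care is citing Steinberg's fixed-point theorem in the right form, since the original formulation is most often stated for Weyl groups or for Coxeter groups in their standard geometric representation; once one is confident that the statement ``the isotropy group of any point is generated by the reflections that fix that point, and these reflections span a parabolic subgroup'' holds for all finite real reflection groups (which it does, as in Humphreys' treatment of Coxeter groups), the remainder of the argument is formal.
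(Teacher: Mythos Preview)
Your proposal is correct and follows essentially the same route as the paper. The paper's proof is slightly more concrete: rather than citing Steinberg's theorem as a black box, it moves $v \in V^L$ into the closure of the fundamental chamber by some $w \in W$ and observes directly that the stabilizer of a point there is the standard parabolic generated by the simple reflections fixing it (which is the usual way Steinberg's theorem is proved anyway). Your handling of the inclusion $\Ind_L^W(R(L)) \subseteq \Ind_P^W(R(P))$ via transitivity and conjugation-invariance of induction matches the paper's, and your explicit verification of the reverse inclusion $\supseteq$ (that proper parabolics lie in $\mathcal L$) is a detail the paper leaves implicit.
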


\begin{proof}
Let $L \in \mathcal L$ and let $v \in V^L$. There exists an element $w \in W$ such that $w\cdot v$ is in the fundamental chamber of $V^{\vee}$ (i.e. $\mathrm{Re}(\alpha_i,v)\geq 0$ for all simple roots $\alpha_i$). Then the fixed point subgroup of $w\cdot v$ is the proper parabolic subgroup $P$ generated by simple reflections fixing $w\cdot v$. Hence $L \subset w^{-1}Pw$ as desired. Since $\Ind_L^W(R(L)) \subseteq \Ind_{w^{-1}Pw}^W(R(w^{-1}Pw))=\Ind_P^W(R(P))$, we have the second assertion.



\end{proof}

Recall that $W'$ is defined in Section \ref{ss clifford}.
\begin{proposition} \label{prop ell det}
Let $w$ be an elliptic element in $W$. Then $\det_{V}(w)=(-1)^{r}$, where $r=\dim_{\mathbb C} V$. In particular, the set of elliptic elements is a subset of $W'$. 
\end{proposition}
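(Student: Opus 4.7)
The plan is to use the fact that $w$ acts orthogonally on $V$, so its spectrum has a rigid structure that combines with the ellipticity hypothesis to force the determinant.

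First I would note that since $W$ is a real reflection group, each $w \in W$ preserves the $W$-invariant inner product on $V_0$ extended to a Hermitian form on $V$. Hence $w$ acts diagonalizably on $V$ with eigenvalues lying on the unit circle. Because $w$ is defined over $\mathbb{R}$, the non-real eigenvalues occur in complex conjugate pairs $\{e^{i\theta}, e^{-i\theta}\}$ with $\theta \in (0,\pi)$, each contributing a factor of $1$ to $\det_V(w)$. The real eigenvalues are $\pm 1$.

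Next I would invoke the ellipticity assumption: $w$ has no nonzero fixed vector, equivalently $1$ is not an eigenvalue. Therefore the spectrum of $w$ consists of the eigenvalue $-1$ with some multiplicity $k$ together with $m$ conjugate pairs of non-real eigenvalues, where $k + 2m = r$. Computing,
\[
\det_V(w) = (-1)^k \cdot \prod_{j=1}^{m} e^{i\theta_j} e^{-i\theta_j} = (-1)^k.
\]
Since $k$ and $r$ differ by the even number $2m$, we have $(-1)^k = (-1)^r$, proving the first claim.

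For the second assertion, recall that $\det_{V_0^\vee}(s_\alpha) = -1 = \det_V(s_\alpha)$ for every simple reflection, so these two determinant characters of $W$ agree on generators and hence on all of $W$. If $\dim_{\mathbb{R}} V_0$ is odd then $W' = W$ and there is nothing to prove. If $\dim_{\mathbb{R}} V_0 = r$ is even, then the first part gives $\det_{V_0^\vee}(w) = \det_V(w) = (-1)^r = 1$, so $w \in \ker(\det_{V_0^\vee}) \cap W = W'$, as required. The only place where care is needed is in identifying the determinant characters on $V$ and $V_0^\vee$, but this is immediate from the behavior on simple reflections.
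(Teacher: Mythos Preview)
Your proof is correct and follows essentially the same eigenvalue-pairing argument as the paper: both use that $w$ is real and orthogonal so non-real eigenvalues pair off in conjugate pairs contributing $1$ to the determinant, and ellipticity rules out the eigenvalue $1$, leaving only $-1$'s whose multiplicity has the same parity as $r$. Your treatment of the second assertion is slightly more explicit than the paper's (which leaves the identification of $\det_V$ with $\det_{V_0^\vee}$ implicit), but the approaches are the same.
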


\begin{proof}
Realize $w$ as a matrix in $O(V)$. Since $w$ is a real matrix, $\zeta$ is an eigenvalue of $w$ if and only if the complex conjugate of $\zeta$ is also an eigenvalue of $w$. Hence, the number of real eigenvalues of $w$ has the same parity as $r$. Since $w$ is elliptic, the only possible real eigenvalue of $w$ is $-1$. Hence $\det(w)=(-1)^r$.



\end{proof}

\begin{remark} \label{rmk ellip no}
We record the number of elliptic conjugacy classes in each noncrystallographic case:
\[ I_2(n), n \mbox{ odd}: (n-1)/2, \quad I_2(n), n \mbox{ even}: n/2, \quad H_3: 4, \quad H_4: 20 .\]
This also gives the dimension of $\overline{R}(W)$ by (\ref{eq dim ellip}).

\end{remark}

\subsection{An isometry between $\overline{R}(W)$ and $R_{\gen}(\widetilde{W}')$}
Let $ R_{\gen}(\widetilde{W})$ be the subspace of $ R(\widetilde{W})$ spanned by genuine irreducible representations of $\widetilde{W}$.

Let $\mathcal C^0$ be the set of conjugacy classes $C$ in $W$ such that $p^{-1}(C)$ splits into two distinct conjugacy classes in $\widetilde{W}$. Then 
\begin{equation}
      \dim_{\mathbb{C}} R_{\gen}(\widetilde{W})= |\mathcal C(\widetilde{W})|-|\mathcal C(W)|=|\mathcal C^0|,
\end{equation}
where $\mathcal C(W)$ (resp. $\mathcal C(\widetilde{W}))$ is the set of conjugacy classes in $W$ (resp. in $\widetilde{W}$).
\begin{remark}
The cardinality $|\mathcal C^0|$ in each case is as follows:
\[  I_2(n), n \mbox{ odd}: (n+3)/2, \quad I_2(n), n \mbox{ even}: n/2 \quad H_3: 8, \quad H_4: 20. \]
\end{remark}

Recall that the spin representation $S$ of $W$ is defined in Section \ref{ss spin mod}. Define a map
\[ i_S : R(W) \rightarrow R_{\gen}(\widetilde{W}),\ \sigma \mapsto \sigma \otimes S .\]

\begin{proposition}
Let $R$ be a noncrystallographic root system. The map $i_S$ is surjective if and only if $R$ is $I_2(n)$ with $n$ even, $H_3$ or $H_4$. 
\end{proposition}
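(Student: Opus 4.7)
The plan is to recast surjectivity of $i_S$ as a non-vanishing statement for the character $\chi_S$ on a specific list of conjugacy classes, and then verify that statement case by case using Remark \ref{rmk spin reps} together with the character tables of Section \ref{ss dc characters}.

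First, I would use the decomposition $R(\widetilde W)=R(W)\oplus R_{\gen}(\widetilde W)$ (as orthogonal summands under the standard Hermitian inner product, since genuine and non-genuine irreducibles are distinct). Since $S$ is genuine, the image of $i_S$ lies in $R_{\gen}(\widetilde W)$, and surjectivity is equivalent to $\mathrm{Image}(i_S)^\perp=0$ inside $R_{\gen}(\widetilde W)$. For $\psi\in R_{\gen}(\widetilde W)$, the identity
\[ \langle \sigma\otimes S,\psi\rangle_{\widetilde W}=\langle \sigma,\psi\otimes S^*\rangle_{\widetilde W} \]
shows that $\psi\in\mathrm{Image}(i_S)^\perp$ iff $\psi\otimes S^*$ pairs trivially with every $\sigma\in R(W)$. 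Since $\psi\otimes S^*$ is itself non-genuine, this forces $\psi\otimes S^*=0$ as a class function, i.e.\ $\psi(\tilde w)\,\overline{\chi_S(\tilde w)}=0$ for every $\tilde w\in\widetilde W$, so $\psi$ must be supported on the zero locus of $\chi_S$.

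Second, I would translate this into conjugacy-class language. A genuine class function vanishes automatically on any class $\tilde C$ with $-\tilde C=\tilde C$, and is otherwise determined by its values on the classes lying above $\mathcal C^0$. Since $\chi_S$ is itself genuine, $\chi_S(-\tilde C)=-\chi_S(\tilde C)$, so $\chi_S$ vanishes on $\tilde C$ iff on $-\tilde C$. The upshot is
\[ i_S \text{ is surjective} \iff \chi_S(\tilde C)\neq 0 \text{ for every class }\tilde C \text{ of }\widetilde W \text{ lying above }\mathcal C^0, \]
the reverse direction being witnessed by the explicit ``genuine indicator'' $\psi$ supported on any single pair $\{\tilde C,-\tilde C\}$ on which $\chi_S$ vanishes.

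Third, I would apply this criterion in each noncrystallographic case. For $I_2(n)$ with $n$ odd, Table \ref{tb ch odd} shows that $C(f_{\alpha_1})$ and $C(-f_{\alpha_1})$ are distinct (separated by $\widetilde\chi_1$) and both lie above the unique reflection class of $W$; but $\chi_{\widetilde\rho_{(n-1)/2}}(f_{\alpha_1})=0$, so surjectivity fails. For $I_2(n)$ with $n$ even, Table \ref{tb ch even} shows $C(\pm f_{\alpha_i})$ and $C(\pm(f_{\alpha_1}f_{\alpha_2})^{n/2})$ each collapse to a single class, so the classes above $\mathcal C^0$ are represented by $\pm 1$ and $\pm(f_{\alpha_1}f_{\alpha_2})^k$ for $1\leq k\leq(n-2)/2$, on which $\chi_{\widetilde\rho_{n/2}}$ takes the values $\pm 2$ and $\pm 2\cos((n-1)k\pi/n)$; a short argument using $\gcd(n-1,n)=1$ rules out vanishing of the cosine in this range. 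For $H_3$, Table \ref{tb chg h3} gives the values of $\widetilde{\overline\chi}_2$ on the four pairs in $\mathcal C^0$ inside $\widetilde{\Alt}_5$ as $\pm 2,\pm\tau,\mp\bar\tau,\pm 1$, all nonzero, and the remaining four pairs in $\mathcal C^0$ for $\widetilde{W(H_3)}$ arise by multiplying by the central element $\widetilde w_0$, which scales $\chi_S$ by $\pm\sqrt{-1}$ and preserves non-vanishing. For $H_4$ one reads the values of $\widetilde\chi_{36}$ on the $20$ classes above $\mathcal C^0$ from \cite[Table II(ii)]{Re} and checks that none is zero. The only real obstacle is computational, namely the $H_4$ tabulation and the elementary number-theoretic check for $I_2(n)$ even; the conceptual content is entirely in the first two paragraphs.
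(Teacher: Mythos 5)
Your proposal is correct and takes essentially the same approach as the paper: reduce surjectivity of $i_S$ to the non-vanishing of $\chi_S$ on every conjugacy class of $\widetilde{W}$ lying over $\mathcal C^0$, then verify this case by case from the character tables and Remark \ref{rmk spin reps}. The only difference is that you spell out (via the adjunction $\langle \sigma\otimes S,\psi\rangle_{\widetilde W}=\langle\sigma,\psi\otimes S^*\rangle_{\widetilde W}$ and the fact that a non-genuine class function orthogonal to all of $R(W)$ must vanish) the reduction that the paper simply asserts as "it suffices to check."
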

\begin{proof}
Since the genuine $\widetilde{W}$ representations are determined by the values of their characters on the conjugacy classes in $p^{-1}(\mathcal C_0)$, it suffices to check that the character of $S$ does not vanish on all conjugacy classes in $p^{-1}(\mathcal C^0)$ if and only if $R$ is not $I_2(n)$ with $n$ odd. This can be verified by using Remark \ref{rmk spin reps} with Table \ref{tb ch odd}, Table \ref{tb ch even} for $I_2(n)$ and Table \ref{tb chg h3} for $H_3$ and \cite[Table II(b)]{Re} for $H_4$.

\end{proof}

The above proposition for the crystallographic root systems is shown in \cite[Lemma 3.3.3]{Ci}.

We define some new notation. Let $R_{\gen}(\widetilde{W}')$ be the subspace of $R(\widetilde{W}')$ generated by genuine representations of $\widetilde{W}'$. Define another map
\[ i_S^{\pm} :R(W) \rightarrow R_{\gen}(\widetilde{W}'),\ \sigma \mapsto \sigma \otimes (S^+-S^-). \]
It has been discovered in \cite{CT} that the map $i^{\pm}_S$ is more interesting to be studied and is related to the Dirac index of modules of graded affine Hecke algebras.


\begin{proposition} \label{prop isometry}
The map $i^{\pm}_S$ satisfies
\[   2e_W(\sigma_1,\sigma_2)= \langle i^{\pm}_S(\sigma_1) , i^{\pm}_S(\sigma_2) \rangle_{\widetilde{W}'}, \]
for $\sigma_1,\sigma_2 \in R(W)$.
\end{proposition}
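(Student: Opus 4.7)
The approach is to rewrite the right-hand side using the key identity from Lemma \ref{lem spin idt} and then to reduce the resulting $\widetilde{W}'$-inner product to a $W$-inner product by exploiting the vanishing of $\tr_{\wedge^{\pm}V}$ on non-elliptic elements.

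First, I would apply the elementary character identity $\langle A\otimes B,\,C\otimes B\rangle_G=\langle A\otimes B\otimes B^*,\,C\rangle_G$, which is valid in $R(G)$ for any finite group $G$ and follows immediately from $\overline{\tr_B(g)}=\tr_{B^*}(g)$. Taking $G=\widetilde{W}'$ and $B=S^+-S^-$ transforms the right-hand side into
\[
\langle \sigma_1\otimes (S^+-S^-)\otimes (S^+-S^-)^*,\ \sigma_2\rangle_{\widetilde{W}'}.
\]
Lemma \ref{lem spin idt} identifies the middle factor with $\tfrac{2}{[W:W']}\wedge^{\pm}V$ in $R(\widetilde{W}')$, so the expression equals $\tfrac{2}{[W:W']}\langle \sigma_1\otimes \wedge^{\pm}V,\,\sigma_2\rangle_{\widetilde{W}'}$. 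Since $\sigma_1$, $\sigma_2$ and $\wedge^{\pm}V$ are all pulled back from $W$-representations along $\widetilde{W}'\to W'\hookrightarrow W$, characters are constant on the two-element fibers of $\widetilde{W}'\to W'$, so this inner product agrees with the corresponding one over $W'$.

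It then remains to prove $\langle \sigma_1\otimes \wedge^{\pm}V,\,\sigma_2\rangle_{W'}=[W:W']\cdot e_W(\sigma_1,\sigma_2)$. If $r=\dim_{\mathbb{C}}V$ is odd then $W=W'$ by definition and the identity is trivial. If $r$ is even then $[W:W']=2$, and the key observation is the character formula $\tr_{\wedge^{\pm}V}(w)=\det_V(1-w)$, which vanishes precisely when $w$ has a nonzero fixed vector on $V$, i.e.\ when $w$ is non-elliptic. By Proposition \ref{prop ell det}, every elliptic element of $W$ lies in $W'$ when $r$ is even, so $\tr_{\wedge^{\pm}V}$ vanishes identically on the coset $W\setminus W'$; hence the sum defining the inner product over $W$ coincides with the sum over $W'$, while the normalizations $1/|W|$ and $1/|W'|$ differ exactly by the factor $[W:W']$. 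Combining the steps yields the factor $2$ in the statement. The only substantive content lies in this last cancellation: the factor $\tfrac{2}{[W:W']}$ supplied by Lemma \ref{lem spin idt} must cancel against the $[W:W']$ produced by passing from the $W'$-inner product to the $W$-inner product, and this cancellation works cleanly precisely because $\wedge^{\pm}V$ is supported on elliptic elements, all of which lie in $W'$.
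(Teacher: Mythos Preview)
Your proof is correct and follows essentially the same route as the paper's: both pull the dual factor across to invoke Lemma~\ref{lem spin idt}, descend from $\widetilde{W}'$ to $W'$ because all representations involved factor through $W$, and then extend the sum from $W'$ to $W$ using $\tr_{\wedge^{\pm}V}(w)=\det_V(1-w)$ together with Proposition~\ref{prop ell det}. The only cosmetic difference is that you split into the cases $r$ odd and $r$ even, whereas the paper handles both uniformly.
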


\begin{proof}
This is \cite[Proposition 3.1]{CT}. Indeed, it follows from the computation below:
\begin{eqnarray*}
        \langle i^{\pm}(\sigma_1), i^{\pm}(\sigma_2) \rangle_{\widetilde{W}'}                    &=&  \langle \sigma_1 \otimes (S^+-S^-)\otimes (S^+-S^-)^*, \sigma_2 \rangle_{\widetilde{W}'} \\
                   &=& \frac{2}{[W:W']} \langle \sigma_1 \otimes  \wedge^{\pm}V, \sigma_2 \rangle_{\widetilde{W}'} \\
                   &=& \frac{2}{[W:W']} \langle \sigma_1 \otimes  \wedge^{\pm}V, \sigma_2 \rangle_{W'} \\
                   &=& \frac{2}{|W|} \sum_{w \in W'} \tr_{\sigma_1}(w) \tr_{\wedge^{\pm}V}(w) \overline{\tr_{\sigma_2}(w)} \\
                   &=& \frac{2}{|W|} \sum_{w \in W} \tr_{\sigma_1}(w) \tr_{\wedge^{\pm}V}(w) \overline{\tr_{\sigma_2}(w)} .\\
\end{eqnarray*}
In the first equality, $(S^+-S^-)^*$ is the dual of $S^+-S^-$. The second equality follows from Lemma \ref{lem spin idt}. The remaining nontrivial equality is the fifth one. By \cite[Lemma 2.1.1]{Ree}, $\tr_{\wedge^{\pm}V}(w)=\det_V(1-w)$ for any $w \in W$ and so $\tr_{\wedge^{\pm}V}$ vanishes off the set of elliptic conjugacy classes in $W$. Now the fifth equality follows from Proposition \ref{prop ell det}. Thus we have 
\[             \langle i^{\pm}(\sigma_1), i^{\pm}(\sigma_2) \rangle_{\widetilde{W}'}       = 2\langle \sigma_1 \otimes \wedge^{\pm} V, \sigma_2 \rangle_{W} = 2e_W(\sigma_1, \sigma_2) .\]


\end{proof}

In general, the map $i^{\pm}_S$ is not an injection since the elliptic pairing $e_W$ may be degenerate. To remedy this, we consider the elliptic representation ring $\overline{R}(W)= R(W)/\mathrm{rad}(e_W)$. Then the map $i^{\pm}_S$ descends to an injection from $\overline{R}(W)$ to $R_{\gen}(\widetilde{W}')$ by Proposition \ref{prop isometry}. Then we also have \begin{eqnarray} \label{eqn equality im ell}
\dim_{\mathbb{C}} \im(i_S^{\pm})&=&|\mathcal C_{\ellip}(W)|. 
\end{eqnarray}

We now define an involution $\iota$ on $R_{\gen}(\widetilde{W}')$. When $\dim_{\mathbb{C}} V$ is odd, define  $\iota(\widetilde{\sigma})=\sgn \otimes \widetilde{\sigma}$. When $\dim_{\mathbb{C}} V$ is even, there is an outer automorphism of order $2$ on $\widetilde{W}'$ by the conjugation of an element in $\widetilde{W} -\widetilde{W}_{\mathrm{even}}$. Then $\iota$ is defined to be the involution on $R_{\gen}(\widetilde{W}')$ induced from the outer automorphism. In particular, we have $\iota(S^{\pm})=S^{\mp}$, and
\[   \iota(\sigma \otimes (S^+-S^-))=-\sigma \otimes (S^+-S^-) \mbox{ for $\sigma \in R(W)$ }.\]
Thus $\im(i_{S}^{\pm}) \subseteq \ker(\iota+1)$. To study when another inclusion $\im(i_{S}^{\pm}) \supseteq \ker(\iota+1)$ holds, we need a simple lemma.

Let $\widetilde{\sigma} \in \widetilde{W}$. First consider $\dim_{\mathbb{C}} V$ is even. We divide into two cases. If the restriction of $\widetilde{\sigma}$ to $\widetilde{W}'$ splits into two representations, then let $\widetilde{\sigma}^+$ and $\widetilde{\sigma}^-$ be those two representations (the choice is arbitrary). Otherwise the restriction of $\widetilde{\sigma}$ to $\widetilde{W}'$ is irreducible and let $\widetilde{\sigma}^+=\widetilde{\sigma}^-=\widetilde{\sigma}$. For $\dim_{\mathbb{C}} V$ odd, let $\widetilde{\sigma}^+=\widetilde{\sigma}$ and $\widetilde{\sigma}^-=\sgn \otimes \widetilde{\sigma}$. By definition, we have $\iota(\widetilde{\sigma}^{\pm})=\sigma^{\mp}$ no matter $\dim_{\mathbb{C}} V$ is odd or even.


\begin{lemma} \label{lem equality ik}
Let $\widetilde{\sigma} \in \Irr_{\gen}(\widetilde{W})$. 
\begin{itemize}
\item[(1)] Assume $\dim_{\mathbb{C}} V$ is even. Then $\widetilde{\sigma}=\sgn \otimes \widetilde{\sigma}$ if and only if $\widetilde{\sigma}^+\neq \widetilde{\sigma}^-$. Moreover,
\[ \dim_{\mathbb{C}}\ker(\iota+1)=|\left\{ \widetilde{\sigma} \in \Irr_{\gen}(\widetilde{W}): \widetilde{\sigma}= \sgn \otimes \widetilde{\sigma} \right\}|.\]
\item[(2)] Assume $\dim_{\mathbb{C}} V$ is odd. Then $\widetilde{\sigma}=\sgn \otimes \widetilde{\sigma}$ if and only if $\widetilde{\sigma}^+=\widetilde{\sigma}^-$. Moreover,
\[ 2\dim_{\mathbb{C}}\ker(\iota+1)=|\left\{ \widetilde{\sigma} \in \Irr_{\gen}(\widetilde{W}):   \widetilde{\sigma}\neq\sgn \otimes \widetilde{\sigma} \right\}| . \]
\end{itemize}
\end{lemma}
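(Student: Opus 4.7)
The plan is to invoke standard Clifford theory for the index-two inclusion $\widetilde{W}' \subset \widetilde{W}$ in the even-dimensional case, and to argue directly in the odd-dimensional case. A preliminary observation is that the sign character $\sgn$ of $W$ coincides with $\det_V|_W$, since both are multiplicative and send each simple reflection to $-1$; when $\dim_{\mathbb{C}} V$ is even, this identifies $\sgn$ with the non-trivial character of the two-element quotient $\widetilde{W}/\widetilde{W}'$ (pulled back via $p$).

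For part (1), I would apply the classical Clifford-theoretic dichotomy: for $\widetilde{\sigma} \in \Irr_{\gen}(\widetilde{W})$, the restriction $\widetilde{\sigma}|_{\widetilde{W}'}$ is irreducible if and only if $\widetilde{\sigma} \not\cong \widetilde{\sigma} \otimes \sgn$, and otherwise splits as $\widetilde{\sigma}^+ \oplus \widetilde{\sigma}^-$ into two non-isomorphic pieces swapped by conjugation by any $\widetilde{w} \in \widetilde{W} \setminus \widetilde{W}'$. This gives the first assertion. For the dimension count, I would organize the irreducible constituents of $R_{\gen}(\widetilde{W}')$ into $\iota$-orbits: non-split restrictions are $\iota$-fixed and contribute nothing to $\ker(\iota+1)$, while each split restriction yields an $\iota$-swapped pair whose difference $\widetilde{\sigma}^+ - \widetilde{\sigma}^-$ spans a one-dimensional subspace of $\ker(\iota+1)$. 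These differences then form a basis of $\ker(\iota+1)$, indexed exactly by those $\widetilde{\sigma} \in \Irr_{\gen}(\widetilde{W})$ with $\widetilde{\sigma} = \sgn \otimes \widetilde{\sigma}$.

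For part (2), $\widetilde{W}' = \widetilde{W}$ and by definition $\widetilde{\sigma}^+ = \widetilde{\sigma}$, $\widetilde{\sigma}^- = \sgn \otimes \widetilde{\sigma}$, so the first claim is tautological. Since $\iota(\widetilde{\sigma}) = \sgn \otimes \widetilde{\sigma}$, I would partition $\Irr_{\gen}(\widetilde{W})$ into $\iota$-orbits: singleton orbits (where $\widetilde{\sigma} = \sgn \otimes \widetilde{\sigma}$) contribute zero to $\ker(\iota+1)$, while each two-element orbit contributes one dimension, spanned by $\widetilde{\sigma} - \sgn \otimes \widetilde{\sigma}$. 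Summing yields $2\dim_{\mathbb{C}} \ker(\iota+1) = |\{\widetilde{\sigma} : \widetilde{\sigma} \neq \sgn \otimes \widetilde{\sigma}\}|$. The argument is essentially bookkeeping; no substantive obstacle is expected, though mild care is needed to keep the two cases cleanly separated, since both the definition of $\iota$ and that of $\widetilde{\sigma}^\pm$ differ between them.
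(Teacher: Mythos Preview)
Your proposal is correct and follows essentially the same route as the paper. The only cosmetic difference is that the paper derives the Clifford-theoretic splitting criterion in (1) by writing out the character inner product $\langle \widetilde{\sigma}, \widetilde{\sigma} \rangle_{\widetilde{W}} = \tfrac{1}{2}\langle \widetilde{\sigma}|_{\widetilde{W}'}, \widetilde{\sigma}|_{\widetilde{W}'} \rangle_{\widetilde{W}'} + \tfrac{1}{|\widetilde{W}|}\sum_{w \in \widetilde{W}_{\mathrm{odd}}}|\tr_{\widetilde{\sigma}}(w)|^2$ and reading off that the restriction has norm $2$ exactly when $\tr_{\widetilde{\sigma}}$ vanishes on $\widetilde{W}_{\mathrm{odd}}$, whereas you invoke the index-two Clifford dichotomy directly; both then exhibit the same basis $\{\widetilde{\sigma}^+ - \widetilde{\sigma}^-\}$ for $\ker(\iota+1)$.
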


\begin{proof}
We only consider $\dim_{\mathbb{C}} V$ is even. The case for $\dim_{\mathbb{C}} V$ odd is similar (and easier). Note that
\begin{eqnarray} \label{eqn even split}
     \langle \widetilde{\sigma}, \widetilde{\sigma} \rangle_{\widetilde{W}} &=& 
                                              \frac{1}{2}\langle \widetilde{\sigma}|_{\widetilde{W}'}, \widetilde{\sigma}|_{\widetilde{W}'} \rangle_{\widetilde{W}'}+\frac{1}{|\widetilde{W}|}\sum_{w \in \widetilde{W}_{\mathrm{odd}}}|\tr_{\widetilde{\sigma}}(w)|^2, 
  \end{eqnarray}
where $\widetilde{W}_{\mathrm{odd}}=p^{-1} ( \det_{V_0^{\vee}}^{-1}(-1) \cap W)$. Then $\langle \widetilde{\sigma}|_{\widetilde{W}'}, \widetilde{\sigma}|_{\widetilde{W}'} \rangle_{\widetilde{W}'}=1$ or $2$. Note $\langle \widetilde{\sigma}|_{\widetilde{W}'}, \widetilde{\sigma}|_{\widetilde{W}'} \rangle_{\widetilde{W}'}=2$ if and only if $|\tr_{\widetilde{\sigma}}(w)|=0$ for all $w \in \widetilde{W}_{\mathrm{odd}}$. This proves the first assertion of (1). Let 
\[B=\left\{ \widetilde{\sigma}^+-\widetilde{\sigma}^- : \widetilde{\sigma} \in \Irr_{\gen}(\widetilde{W}) \mbox{ and } \widetilde{\sigma}^+ \neq \widetilde{\sigma}^- \right\}.\] The second assertion will follow from the first assertion if we show $B$ forms a basis for $\ker(\iota+1)$. To this end, note that $B$ spans $\ker(\iota+1)$ since $\iota(\widetilde{\sigma}^+)=\widetilde{\sigma}^-$, and $B$ is linearly independent by considering the inner product $\langle , \rangle_{\widetilde{W}'}$.

\end{proof}

\begin{proposition}\label{prop equality ik}
Let $R$ be a noncrystallographic root system. Then $\im(i^{\pm}_S)=\ker(1+\iota)$.
\end{proposition}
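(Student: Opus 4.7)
The inclusion $\im(i^{\pm}_S) \subseteq \ker(\iota+1)$ has already been established in the text preceding Lemma \ref{lem equality ik}. Hence the proposition will follow once we verify the equality of dimensions
\[
\dim_{\mathbb{C}} \im(i^{\pm}_S) \;=\; \dim_{\mathbb{C}} \ker(\iota+1).
\]
The plan is to compute both sides independently and compare them case by case across the noncrystallographic root systems $I_2(n)$ ($n$ odd), $I_2(n)$ ($n$ even), $H_3$ and $H_4$.

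On the left-hand side, the identity (\ref{eqn equality im ell}) together with Remark \ref{rmk ellip no} immediately gives $\dim_{\mathbb{C}} \im(i^{\pm}_S) = |\mathcal C_{\ellip}(W)|$, which equals $(n-1)/2$, $n/2$, $4$ and $20$ respectively in the four cases. On the right-hand side, Lemma \ref{lem equality ik} reduces the computation to counting irreducible genuine $\widetilde{W}$-representations $\widetilde{\sigma}$ with either $\widetilde{\sigma} \cong \sgn\otimes \widetilde{\sigma}$ (when $\dim_{\mathbb{C}} V$ is even) or $\widetilde{\sigma} \not\cong \sgn \otimes \widetilde{\sigma}$ (when $\dim_{\mathbb{C}} V$ is odd). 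So the work reduces to inspecting how tensoring with $\sgn$ acts on the genuine characters in each case.

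For the dihedral cases, I would read off from Table \ref{tb ch odd} and Table \ref{tb ch even} that $\sgn$ takes value $+1$ on every product $(f_{\alpha_1}f_{\alpha_2})^k$ and reverses signs only on the reflection classes $\pm f_{\alpha_i}$, on which the characters $\widetilde{\rho}_i$ vanish. Hence every $\widetilde{\rho}_i$ is $\sgn$-invariant while $\widetilde{\chi}_1$ and $\widetilde{\chi}_2$ (present only for $n$ odd) get interchanged, giving exactly $(n-1)/2$ and $n/2$ self-dual genuine characters respectively --- matching $|\mathcal C_{\ellip}(W)|$. For $H_3$, since $\dim_{\mathbb{C}} V = 3$ is odd and the central element $\widetilde{w}_0$ acts by $\pm \sqrt{-1}$ on $\widetilde{\chi}^{\pm}$ while $\sgn(w_0) = -1$, tensoring with $\sgn$ exchanges $\widetilde{\chi}^+$ and $\widetilde{\chi}^-$ for each of the four genuine $\widetilde{\Alt}_5$-characters. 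Thus all $8$ genuine representations are non-self-dual, and Lemma \ref{lem equality ik}(2) yields $\dim_{\mathbb{C}} \ker(\iota+1) = 4$, again matching.

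The main obstacle is the $H_4$ case. Here $\dim_{\mathbb{C}} V = 4$ is even and there are $20$ genuine irreducibles $\widetilde{\chi}_{35}, \ldots, \widetilde{\chi}_{54}$, so I need to check that \emph{every} one of them satisfies $\widetilde{\chi}_i \cong \sgn \otimes \widetilde{\chi}_i$, in order to conclude $\dim_{\mathbb{C}} \ker(\iota+1) = 20 = |\mathcal C_{\ellip}(W(H_4))|$. The cleanest approach is to use the semidirect product description $\widetilde{W(H_4)} \cong (\widetilde{\Alt}_5 \times \widetilde{\Alt}_5)\rtimes \langle \theta \rangle$ from Section \ref{ss dc characters}: since $\sgn$ restricts trivially to $\widetilde{\Alt}_5 \times \widetilde{\Alt}_5$ (each factor lies in the commutator subgroup) and the remaining coset $\widetilde{W} \setminus (\widetilde{\Alt}_5 \times \widetilde{\Alt}_5)$ carries $\tr_{\widetilde{\chi}_i}$ values that vanish for all $20$ genuine characters --- a fact to be read off from \cite[Table II(ii)]{Re} --- one obtains $\widetilde{\chi}_i \otimes \sgn = \widetilde{\chi}_i$ for all $i$. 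Combined with Lemma \ref{lem equality ik}(1), this gives the required equality of dimensions and completes the proof in all four cases.
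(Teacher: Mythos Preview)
Your proof is correct and follows essentially the same approach as the paper: reduce to a dimension count via the already-established inclusion, compute $\dim_{\mathbb{C}}\im(i^{\pm}_S)$ from (\ref{eqn equality im ell}) and Remark \ref{rmk ellip no}, and compute $\dim_{\mathbb{C}}\ker(\iota+1)$ via Lemma \ref{lem equality ik} by counting $\sgn$-invariant (or non-invariant) genuine characters case by case. The paper simply records the relevant counts and cites the same ingredients; you supply a bit more justification, particularly in the $H_4$ case where your observation that $\widetilde{\Alt}_5\times\widetilde{\Alt}_5$ is perfect (hence $\sgn$ is trivial on it) is a clean way to see why one only needs the vanishing of the genuine characters on the nontrivial coset.
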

\begin{proof}
Since $\im(i^{\pm}_S) \subset \ker(1+\iota)$, it suffices to show that $\dim_{\mathbb{C}} \im(i^{\pm}_S)=\dim_{\mathbb{C}} \ker(1+\iota)$. We record the number $|\left\{ \widetilde{\sigma} \in \Irr_{\gen}(\widetilde{W}): \widetilde{\sigma}= \sgn \otimes \widetilde{\sigma} \right\}|$ for $\dim_{\mathbb{C}} V$ even:
\[I_2(n) \mbox{ $n$ odd}: (n-1)/2,\quad I_2(n) \mbox{ $n$ even}: n/2,\quad H_4: 20 . \] 
The number $|\left\{ \widetilde{\sigma} \in \Irr_{\gen}(\widetilde{W}):   \widetilde{\sigma}\neq\sgn \otimes \widetilde{\sigma} \right\}|$ for $H_3$ is  $8$. Then the result follows from (\ref{eqn equality im ell}), Remark \ref{rmk ellip no} and Lemma \ref{lem equality ik}.
\end{proof}

We remark that the above proposition is not true for some crystallographic cases, for example $A_n$ ($n \geq 5$). 




\subsection{An orthonormal basis, and a connection to the graded affine Hecke algebra}

In this subsection, we will look at the example $W=W(I_2(n))$ ($n$ odd). We shall construct an orthonormal basis for $\overline{R}(W)$ such that every element in the basis is  the sum of irreducible characters in $\mathbb{Z}$-coefficients. The significance of this basis is explained in Remark \ref{rmk sig ob} below.

\bigskip

\noindent
{\it Example on $I_2(n)$, $n$ odd}

Let $W=W(I_2(n))$. It is easy to see that $\widetilde{W}'$ is a cyclic group of order $2n$. Then each genuine $\widetilde{W}$-representation $\widetilde{\rho}_i$ in Table \ref{tb ch odd} splits into two one-dimensional $\widetilde{W}'$-representations, denoted $\widetilde{\rho}^+_i$ and $\widetilde{\rho}^-_i$. We shall fix $\widetilde{\rho}^+_i$ and $\widetilde{\rho}^-_i$ such that \[ \widetilde{\rho}_i^+(f_{\alpha_1}f_{\alpha_2})=-e^{2i\sqrt{-1}\pi/n} \mbox{ and } \widetilde{\rho}_i^-(f_{\alpha_1}f_{\alpha_2})=-e^{-2i\sqrt{-1}\pi/n}. \]
The spin module $S$ of $\widetilde{W}$ is the two dimensional module $\widetilde{\rho}_{(n-1)/2}$. Set $S^+=\widetilde{\rho}_{(n-1)/2}^+$ and $S^-=\widetilde{\rho}_{(n-1)/2}^-$. Now a computation gives
\[      \sgn \otimes (S^+-S^-)= S^+-S^- , \]
and for $j=1, \ldots, (n-3)/2$,
\[      \phi_j \otimes (S^+-S^-) = \left(\widetilde{\rho}_{(n-1-2j)/2}^+-\widetilde{\rho}_{(n-1-2j)/2}^-\right)-\left(\widetilde{\rho}_{(n+1-2j)/2}^+-\widetilde{\rho}_{(n+1-2j)/2}^-\right) .\]
where $\sgn$ and $\phi_j$ are as in Table \ref{tb ch odd} and considered to be the restriction in $\widetilde{W}'$ (note that $\sgn$ restricted to $\widetilde{W}'$ is just the trivial representation). For notational convenience, set $\widetilde{\sigma}_{j}=\widetilde{\rho}_{(n+1-2j)/2}^+-\widetilde{\rho}_{(n+1-2j)/2}^-$ 
for $j=1, \ldots, (n-1)/2$. 

By Proposition \ref{prop fix gp}, 
\[R_{\ind}(W)=\mathrm{span}_{\mathbb{C}}\left\{ \trivial \oplus \phi_1 \oplus \ldots \oplus \phi_{(n-1)/2}, \sgn\oplus \phi_1 \oplus \ldots \oplus \phi_{(n-1)/2} \right\} \]
 and hence $ \left\{  \sgn, \phi_1, \ldots, \phi_{(n-3)/2}  \right\}$
forms an ordered basis for $\overline{R}(W)$. We also fix an ordered basis for $\im(i^{\pm}_S)=\ker(1+\iota)$ (Proposition \ref{prop equality ik}):
\[  \left\{ \widetilde{\sigma}_{(n-1)/2},\ldots, \widetilde{\sigma}_1  \right\}.\] 

Then the matrix representation of $i_S^{\pm}$ with respect to the above two bases is an upper triangular matrix $I-J$, where $I$ is the $(n-1)/2 \times (n-1)/2$ identity matrix and $J$ is the $(n-1)/2 \times (n-1)/2$ matrix with $1$ in the superdiagonal and $0$ elsewhere. Then inspecting the columns of the matrix $(I-J)^{-1}=I+J+\ldots+J^{(n-3)/2}$, we could obtain a new basis $\left\{ \omega_i \right\}_{i=1}^{(n-1)/2}$ for $\overline{R}(W)$:
\[ \omega_1=\sgn,\quad  \omega_i = \sgn \oplus \phi_1 \oplus \ldots \oplus \phi_{i-1}\ (i=2, \ldots, (n-1)/2),
\]
 so that the new basis has the property that $i^{\pm}_S(\omega_i)=\widetilde{\sigma}_{i}$. Then, by Proposition \ref{prop isometry},
\[e_W(\omega_i, \omega_j)=\frac{1}{2} \langle i^{\pm}_S(\omega_i), i^{\pm}_S(\omega_j) \rangle_{\widetilde{W}'}=\langle \widetilde{\sigma}_i, \widetilde{\sigma}_j \rangle_{\widetilde{W}'}=\delta_{ij}.\]
Hence $\left\{ \omega_i \right\}_{i=1}^{(n-1)/2}$ is an orthonormal basis for $\overline{R}(W)$.

\begin{remark} \label{rmk sig ob}
The importance of this orthonormal basis is as follows. Let $\mathbb{H}$ be the graded affine Hecke algebra (see \cite{BCT} for the definition) associated to $W$ with the parameter function $c$. Let $X$ and $X'$ be discrete series of $\mathbb{H}$ (i.e. for the central character $\gamma$ of $X$ (or $X'$), $(\omega, \gamma)<0$ for all the fundamental weight $\omega$ in $R$). If $W$ is a Weyl group, then the discrete series as $W$-modules form an orthonormal set in $\overline{R}(W)$ (\cite[Section 3]{OS}, also see \cite{CT}). We conjecture the same holds in noncrystallographic cases and so the above constructed orthogonal basis would give some hints for the unknown $W$-module structure of discrete series in noncrystallographic cases. Some study for discrete series in noncrystallographic cases can be found in \cite{HO}, \cite{KR} and \cite{Kro}.


\end{remark}


\end{document}